\newtheorem{theorem}{Theorem}
\newtheorem{lemma}{Lemma}
\newtheorem{corollary}{Corollary}
\newtheorem{example}{Example}
\begin{document}
\title{Functionals of spatial point processes having a density with respect to the Poisson process}
\author{Viktor Bene\v{s}$^1$, Mark\'{e}ta Zikmundov\'{a}$^2$}

\maketitle
\noindent $^1$Charles University in Prague, Faculty of Mathematics and Physics, Department of Probability and Mathematical Statistics, Sokolovsk\'{a} 83, 18675 Praha 8-Karl\'{\i}n, Czech Republic, benesv@karlin.mff.cuni.cz

\noindent $^2$Institute of Chemical Technology, Faculty of Chemical Engineering, Department of Mathematics, Technick\'{a} 5, 16628 Praha 6-Dejvice, Czech Republic, zikmundm@vscht.cz

\bigskip

\noindent {\bf Abstract}

\noindent $U$-statistics of spatial point processes given by a density with respect to a Poisson process are investigated. In the first half of the paper general relations are derived for the moments of the functionals using kernels from the Wiener-It\^o chaos expansion. In the second half we obtain more explicit results for a system of $U$-statistics of some parametric models in stochastic geometry. In the logaritmic form functionals are connected to Gibbs models. There is an inequality between moments of Poisson and non-Poisson functionals in this case, and we have a version of the central limit theorem in the Poisson case.

\bigskip

\noindent Keywords: difference of a functional, limit theorem, moments, U-statistics

\medskip

\noindent Classification: 60G55, 60D05

\section{Introduction}
Recently the investigation of functionals of Poisson point processes using differences and Wiener-It\^o chaos expansion has been developed, cf. \cite{RefJ}. In \cite{RefB} central limit theorems for $U$-statistics of Poisson processes were derived based on Malliavin calculus and the Stein method. The Wiener chaos theory involves both Gaussian and Poisson multiple integrals \cite{RefT}. In the present paper we study functionals of non-Poisson point processes given by a density w.r.t. a Poisson process. Specially $U$-statistics are of interest and general formulas for their moments are given based on conditional intensities. The paper yields an alternative approach to the moment evaluation given by \cite{RefD} where it is based on Georgii-Nguyen-Zessin formula. The product of a functional and a density is further studied in a logarithmic form using the characterization theorem for Gibbs processes from \cite{RefA}. There is an inequality between moments of Poisson and non-Poisson functionals in this case, and we have a version of central limit theorem in the Poisson case.

In the second part of the paper parametric models for point processes of interacting particles \cite{RefM} are investigated as a special case of the general theory. We concentrate on lower-dimensional particles, namely interacting segments in the plane and plates in the three-dimensional space and their natural $U$-statistics. Mixed moments moments are presented in a closed form using explicit formulas or by means of partitions. Limitations on the parameter space are indicated. Finally in the Poisson case using results from \cite{RefL} the central limit theorem for a vector of $U$-statistics of the model is discussed.

\section{Moments of functionals of point processes having a density}\label{sec:1} 
Consider a bounded Borel set $B\subset {\mathbb R}^d$ with Lebesgue measure $|B|>0$ and a measurable space $({\mathbf N},{\mathcal N})$ of integer-valued finite measures on $B.$ ${\mathcal N}$ is the smallest $\sigma $-algebra which makes the mappings $x\mapsto x(A)$ measurable for all Borel sets $A\subset B$ and all $x\in {\mathbf N}.$ A random element having a.s. values in $({\mathbf N},{\mathcal N})$ is called a finite point process. Let a Poisson point process $\eta $ on $B$ have finite intensity measure $\lambda $ with no atoms and distribution $P_\eta $ on $\mathcal N.$ We consider a finite point process $\mu $ on $B$ given by a density $p$ w.r.t. $\eta ,$ i.e. with distribution $P_\mu $ \begin{equation}\label{dns}dP_\mu (x)=p(x)dP_\eta (x),\; x\in \mathbf N,\end{equation} where $p:{\mathbf N}\rightarrow {\mathbb R}_+$ is measurable satisfying $$\int_{\mathbf N}p(x)dP_\eta (x)=1.$$ For a measurable map $F:{\mathbf N}\rightarrow {\mathbb R},$ $F(\mu )$ is a random variable.
As described in \cite{RefA}, p.61, integer-valued finite measures can be represented in this context by $n$-tuples of points corresponding to their support ($n$ is variable). Sometimes we will apply this representation without using its explicit notation from \cite{RefA}. We deal with $L_p$ spaces, $1\leq p<+\infty ,$ of functions on various measure spaces. The objective of the present paper is formula $${\mathbb E}F(\mu )={\mathbb E}[F(\eta )p(\eta )].$$ 
\begin{lemma}\label{stva}
Let for fixed $m\in {\mathbb N}$ it holds $F\in  L_m(P_\mu ),\; G_m(x)=F^m(x)p(x).$ Then the $m$-th moment \begin{equation}\label{mom}{\mathbb E}F^m(\mu )={\mathbb E}G_m(\eta ),\end{equation} specially for $m=1,2$ we have \begin{equation}\label{stro}
{\mathbb E}F(\mu )={\mathbb E}G_1(\eta ),\quad var F(\mu )={\mathbb E}G_2(\eta )-[{\mathbb E}G_1(\eta )]^2.\end{equation}
\end{lemma}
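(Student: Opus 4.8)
The plan is to reduce the statement to the defining relation \eqref{dns} combined with the transfer theorem for expectations (the ``law of the unconscious statistician''). Since $F:\mathbf{N}\rightarrow\mathbb{R}$ is measurable, the map $x\mapsto F^m(x)$ is measurable on $(\mathbf{N},\mathcal{N})$, so $F^m(\mu)$ is a genuine random variable whose expectation is obtained by integrating $F^m$ against the law $P_\mu$ of $\mu$. First I would write
$$\mathbb{E}F^m(\mu)=\int_{\mathbf{N}}F^m(x)\,dP_\mu(x),$$
which is valid as soon as the integral exists. The hypothesis $F\in L_m(P_\mu)$ is precisely what guarantees $\int_{\mathbf{N}}|F(x)|^m\,dP_\mu(x)<+\infty$, so that $F^m$ is $P_\mu$-integrable and the left-hand side is well defined and finite.

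Next I would substitute the density relation \eqref{dns}, namely $dP_\mu(x)=p(x)\,dP_\eta(x)$, into this integral. Because $p\geq 0$ this is a legitimate change of measure, and we obtain
$$\int_{\mathbf{N}}F^m(x)\,dP_\mu(x)=\int_{\mathbf{N}}F^m(x)p(x)\,dP_\eta(x)=\int_{\mathbf{N}}G_m(x)\,dP_\eta(x)=\mathbb{E}G_m(\eta),$$
which is exactly \eqref{mom}. The specializations in \eqref{stro} then follow at once: for $m=1$ the identity is immediate, and for $m=2$ one combines $\mathbb{E}F^2(\mu)=\mathbb{E}G_2(\eta)$ with $\mathbb{E}F(\mu)=\mathbb{E}G_1(\eta)$ in the definition $var\,F(\mu)=\mathbb{E}F^2(\mu)-[\mathbb{E}F(\mu)]^2$.

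There is no serious obstacle here; the core is a one-line application of the Radon--Nikodym/transfer principle. The only point requiring a little care is the integrability bookkeeping, which I would record explicitly: $G_m$ is $P_\eta$-integrable because
$$\int_{\mathbf{N}}|G_m(x)|\,dP_\eta(x)=\int_{\mathbf{N}}|F(x)|^m p(x)\,dP_\eta(x)=\int_{\mathbf{N}}|F(x)|^m\,dP_\mu(x)<+\infty$$
by the assumption $F\in L_m(P_\mu)$. This simultaneously shows that finiteness of the two sides is equivalent, so that no spurious infinities can arise in the identity \eqref{mom}.
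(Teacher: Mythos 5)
Your proof is correct and follows essentially the same route as the paper: write $\mathbb{E}F^m(\mu)$ as an integral against $P_\mu$, substitute the density relation \eqref{dns}, and identify the result as $\mathbb{E}G_m(\eta)$, with the variance formula following from the $m=1,2$ cases. The explicit integrability bookkeeping you add is a sound (if minor) elaboration of what the paper leaves implicit.
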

\noindent{\bf Proof:} It holds ${\mathbb E}F^m(\mu )=\int F^m(x)dP_\mu (x)=\int F^m(x)p(x)dP_\eta (x)={\mathbb E}G_m(\eta ),$ specially ${\mathbb E}F(\mu )={\mathbb E}G_1(\eta ),\; var F(\mu )={\mathbb E}F(\mu )^2-({\mathbb E}F(\mu ))^2$
\hfill $\Box $\\

For a functional $F,\;y\in B,$ one defines the difference operator $D_yF$ for a point process $\mu $ as a random variable $$D_yF(\mu )=F(\mu + \delta_y)-F(\mu ),$$ where $\delta_y $ is a Dirac measure at the point $y.$ Inductively for $n\geq 2$ and $(y_1,\dots ,y_n)\in B^n$ we define a function $$D_{y_1,\dots ,y_n}^nF=D_{y_1}^1D_{y_2,\dots ,y_n}^{n-1}F,$$
where $D^1_y=D_y,\; D^0F=F.$ Operator $D^n_{y_1,\dots ,y_n}$ is symmetric in $y_1,\dots ,y_n$ and symmetric functions $T_n^\mu F$ on $B^n$ are defined as $$T^\mu_nF(y_1,\dots ,y_n)={\mathbb E}D_{y_1,\dots ,y_n}^nF(\mu ),$$ $n\in {\mathbb N},\; T_0^\mu F={\mathbb E}F(\mu ),$ whenever the expectations exist. We write $T_nF$ for $T^\eta_nF.$

For the functionals of a Poisson process Theorem 1.1 in \cite{RefJ} says that given $F,\tilde{F}\in L^2(P_\eta )$ it holds
\begin{equation}\label{skaso}{\mathbb E}[F(\eta ){\tilde F}(\eta )]={\mathbb E}F(\eta ){\mathbb E}{\tilde F}(\eta )+\sum_{n=1}^\infty\frac{1}{n!}\langle T_nF,T_n{\tilde F}\rangle_n,\end{equation} where $\langle .,.\rangle_n$ is the scalar product in $L_2(\lambda^n).$
 
\subsection{Explicit formulas for $U$-statistics}
\label{sec:2}
A $U$-statistic of order $k\in {\mathbb N}$ of a finite point process $\mu $ is a functional defined by
\begin{equation}\label{ust}F(\mu )=\sum_{(x_1,\dots ,x_k)\in\mu^k_{\neq }} f(x_1,\dots ,x_k),\end{equation} where $f:B^k\rightarrow {\mathbb R}$ is a function symmetric w.r.t. to the permutations of its variables, $f\in L_1(\lambda^k ).$ Here $\mu^k_{\neq }$ is the set of $k$-tuples of different points of $\mu .$ We say that $F$ is driven by $f.$ By the Slivnyak-Mecke theorem \cite{RefS} we have $${\mathbb E}F(\eta )=\int_B\dots\int_Bf(x_1,\dots ,x_k)\lambda (d(x_1,\dots ,x_k)),$$ where we write $\lambda (d(x_1,\dots ,x_k))$ instead of $\lambda (dx_1)\dots\lambda (dx_k).$ This notation is used throughout the whole paper. Following \cite{RefB} for $F\in L_2(P_\eta )$ using (\ref{skaso}) it holds \begin{equation}\label{vari}var F(\eta )=\sum_{i=1}^ki!\binom{k}{i}^2\times\end{equation}$$\int_{B^i}\left(\int_{B^{k-i}}f(y_1,\dots ,y_i,x_1,\dots ,x_{k-i})\lambda(d(x_1,\dots ,x_{k-i}))\right)^2\lambda (d(y_1,\dots ,y_i)).$$
It is then derived that for $U$-statistic of order $k$ it holds \begin{equation}\label{ustd}D^n_{y_1,\dots ,y_n}F=\frac{k!}{(k-n)!}\sum_{(x_1,\dots ,x_{k-n})\in\mu^{k-n}_{\neq}} f(y_1,\dots ,y_n,x_1,\dots ,x_{k-n})\end{equation} for $n\leq k,\;D^n_{y_1,\dots ,y_n}F=0$ for $n>k.$
Thus \begin{equation}\label{tnf}T_nF(y_1,\dots ,y_n)=\frac{k!}{(k-n)!}\int_{B^{k-n}} f(y_1,\dots ,y_n,x_1,\dots ,x_{k-n})\lambda (d(x_1,\dots ,x_{k-n})),\end{equation}$n\leq k,\; T_nF(y_1,\dots ,y_n)=0,n>k.$

Let $\mu $ be a finite point process with density $p$ satisfying \begin{equation}\label{hered}p(x)>0\Rightarrow p(\tilde{x})>0\end{equation} for all $\tilde{x}\subset x.$ For the (Papangelou) conditional intensity of $\mu ,$ see \cite{RefA}, it holds $$\lambda^*(u,x)=\frac{p(x\cup \{u\})}{p(x)},\; x\in {\mathbf N},\: u\in B,\: u\notin x,$$ here probability $P(u\in\mu )=0.$ For $p(x)=0$ we put $\lambda^*(u,x)=0.$ For $n>1$ we use analogously a.s. $$\lambda^*_n(u_1,\dots ,u_n,x)=\frac{p(x\cup \{u_1,\dots ,u_n\})}{p(x)},$$ $u_1,\dots ,u_n\in B$ distinct, the conditional intensity of $n$-th order of $\mu ,\;\lambda^*_0\equiv 1.$ We observe that $\lambda^*_n$ is symmetric in the variables $u_1,\dots ,u_n.$ A point process $\mu $ with conditional intensity $\lambda^*$ has intensity function \begin{equation}\label{inyt}\rho (u)={\mathbb E}\lambda^*(u,\mu ).\end{equation}
\begin{lemma}\label{pap}Let $p\in L_2(P_\eta ),\;n\in {\mathbb N},$
then $\lambda^n$-a.e. it holds \begin{equation}\label{tnp}T_np(y_1,\dots , y_n)=\sum_{J\subset \{1,\dots ,n\}}(-1)^{n-|J|}{\mathbb E}\lambda^*_{|J|}(\{y_j,\:j\in J\},\mu ),\end{equation} where $|J|$ is the cardinality of $J.$
\end{lemma}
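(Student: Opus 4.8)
The plan is to combine a closed form for the iterated difference operator with the definition of the higher-order Papangelou conditional intensities. Recall that $T_np=T^\eta_np$, so by definition $T_np(y_1,\dots,y_n)=\mathbb{E}\,D^n_{y_1,\dots,y_n}p(\eta)$, where the expectation is taken over the Poisson process $\eta$.

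First I would establish, by induction on $n$, the inclusion--exclusion identity
$$D^n_{y_1,\dots,y_n}F(x)=\sum_{J\subset\{1,\dots,n\}}(-1)^{n-|J|}F\Big(x+\sum_{j\in J}\delta_{y_j}\Big),$$
valid for any functional $F$ and any configuration $x$. The base case $n=1$ is the definition $D_yF(x)=F(x+\delta_y)-F(x)$, and the inductive step follows by applying $D_{y_1}$ to the representation of $D^{n-1}_{y_2,\dots,y_n}F$ and splitting the subsets of $\{1,\dots,n\}$ according to whether they contain the index $1$. Applying this with $F=p$ and $x=\eta$ and taking expectation term by term, which is legitimate because $p\in L_2(P_\eta)$ forces the kernels $T_np$ (and hence each summand) to be finite $\lambda^n$-a.e. via the chaos expansion (\ref{skaso}), gives
$$T_np(y_1,\dots,y_n)=\sum_{J\subset\{1,\dots,n\}}(-1)^{n-|J|}\,\mathbb{E}\Big[p\Big(\eta+\sum_{j\in J}\delta_{y_j}\Big)\Big].$$

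It then remains to identify each expectation on the right with $\mathbb{E}\lambda^*_{|J|}(\{y_j,\,j\in J\},\mu)$. Writing this last expectation against $P_\mu$ and using the density relation $dP_\mu=p\,dP_\eta$ together with the definition of the conditional intensity of order $|J|$,
$$\mathbb{E}\lambda^*_{|J|}(\{y_j,\,j\in J\},\mu)=\int_{\mathbf N}\frac{p\big(x\cup\{y_j,\,j\in J\}\big)}{p(x)}\,p(x)\,dP_\eta(x)=\mathbb{E}\Big[p\Big(\eta+\sum_{j\in J}\delta_{y_j}\Big)\Big].$$
Substituting this into the previous display yields exactly (\ref{tnp}).

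The only genuinely delicate point is this last identification, since the conditional intensity is a ratio and the cancellation of $p(x)$ must be handled on the set where $p$ vanishes. Here the heredity assumption (\ref{hered}) is decisive: $p(x)=0$ forces $p\big(x\cup\{y_j,\,j\in J\}\big)=0$ because $x\subset x\cup\{y_j,\,j\in J\}$, so both integrands vanish on $\{p(x)=0\}$, where the convention fixes $\lambda^*_{|J|}=0$; and on $\{p(x)>0\}$ the cancellation is valid. This guarantees the two integrands agree $P_\eta$-a.s. Finally, the distinctness of $y_1,\dots,y_n$ required to invoke $\lambda^*_{|J|}$ fails only on the diagonal, which is a $\lambda^n$-null set because $\lambda$ has no atoms; this is the source of the ``$\lambda^n$-a.e.'' qualification in the statement.
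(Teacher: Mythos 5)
Your proof is correct and follows essentially the same route as the paper's: the inclusion--exclusion formula for $D^n_{y_1,\dots,y_n}p$ (which the paper cites from Last--Penrose rather than reproving), followed by taking expectations term by term and identifying each $\mathbb{E}\bigl[p(\eta\cup\{y_j,\,j\in J\})\bigr]$ with $\mathbb{E}\lambda^*_{|J|}(\{y_j,\,j\in J\},\mu)$ via $dP_\mu=p\,dP_\eta$. Your explicit handling of the set $\{p=0\}$ through the heredity assumption (\ref{hered}) is a detail the paper glosses over when it simply writes the integrand against $dP_\mu(x)/p(x)$.
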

\noindent{\bf Proof:} Under the assumption $p\in L_2(P_\eta )$ it follows from (\ref{skaso}) that $T_np\in L_2(\lambda^n)$ and
since  $D^n_{y_1,\dots ,y_n}p(\eta )=\sum_{J\subset \{1,\dots ,n\}}(-1)^{n-|J|}p(\eta\cup \{y_j,\:j\in J\}),$ cf. \cite{RefJ}, we have $$T_np(y_1,\dots , y_n)={\mathbb E}D^n_{y_1,\dots ,y_n}p(\eta )=$$$$=\int\sum_{J\subset \{1,\dots ,n\}}(-1)^{n-|J|}p(x\cup \{y_j,\:j\in J\})\frac{dP_\mu (x)}{p(x)}$$ $\lambda^n$-a.e. and (\ref{tnp}) follows. \hfill $\Box $\\
\begin{theorem}\label{th1}Let $F_j$ be $U$-statistics of order $k_j,\:j=1,\dots ,m,$ such that $$\prod_{j=1}^mF_j\in L_2(P_\eta )$$ and the density $p\in L_2(P_\eta ).$ Then it holds \begin{equation}\label{mn}{\mathbb E}\left[\prod_{j=1}^mF_j(\mu )\right]={\mathbb E}\left[\prod_{j=1}^mF_j(\eta )\right]+ \sum_{n=1}^q\frac{1}{n!} \langle T_n\prod_{j=1}^mF_j,T_np\rangle_n, \end{equation} where $q={\sum_{i=1}^mk_i}.$ 
\end{theorem}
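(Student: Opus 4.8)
The plan is to reduce the claim to the scalar-product expansion (\ref{skaso}) for Poisson functionals, applied to $F=\prod_{j=1}^m F_j$ and $\tilde F = p$. First I would invoke the density identity ${\mathbb E}H(\mu)={\mathbb E}[H(\eta)p(\eta)]$, which follows directly from (\ref{dns}) for any measurable $H$ (it is the mechanism behind Lemma \ref{stva}), taking $H=\prod_{j=1}^m F_j$ to get
$${\mathbb E}\Big[\prod_{j=1}^m F_j(\mu)\Big]={\mathbb E}\Big[\Big(\prod_{j=1}^m F_j(\eta)\Big)p(\eta)\Big].$$
By hypothesis $\prod_{j=1}^m F_j\in L_2(P_\eta)$ and $p\in L_2(P_\eta)$, so (\ref{skaso}) applies with these two arguments.

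Second, I would simplify the leading term. By (\ref{skaso}) the right-hand side equals ${\mathbb E}[\prod_j F_j(\eta)]\cdot{\mathbb E}p(\eta)+\sum_{n\geq 1}\frac{1}{n!}\langle T_n\prod_j F_j,\,T_n p\rangle_n$, and since $p$ is a probability density with respect to $P_\eta$ we have ${\mathbb E}p(\eta)=\int_{\mathbf N} p\,dP_\eta=1$. Hence the first summand collapses to ${\mathbb E}[\prod_j F_j(\eta)]$, matching the first term on the right of (\ref{mn}).

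Third, and this is the step carrying the real content, I would show that the series truncates at $n=q=\sum_i k_i$, i.e. that $T_n\prod_j F_j=0$ for $n>q$. The cleanest route is to observe that a product of $U$-statistics of orders $k_1,\dots,k_m$ is again a finite linear combination of $U$-statistics of orders at most $q$: expanding $\prod_j F_j(\mu)=\prod_j\sum_{(x^{(j)})\in\mu^{k_j}_{\neq}}f_j(x^{(j)})$ and grouping the resulting tuples according to which points coincide produces $U$-statistics whose maximal order, attained when all $q$ points are distinct, is exactly $q$. By the vanishing property recorded after (\ref{ustd}), each such piece of order $r\leq q$ satisfies $T_n=0$ for $n>r$, so by linearity of $T_n$ we get $T_n\prod_j F_j=0$ whenever $n>q$, and the sum runs only to $q$. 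An equivalent inductive argument uses the discrete Leibniz rule $D_y(FG)=(D_yF)G+F(D_yG)+(D_yF)(D_yG)$ together with $D^{n_j}F_j=0$ for $n_j>k_j$: distributing $n$ difference operations over the $m$ factors can give a nonzero term only if no factor receives more than $k_j$ of them, which is impossible once $n>q$.

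Combining the three steps yields (\ref{mn}). The main obstacle is the truncation in the third step, namely making the passage from ``product of $U$-statistics'' to ``$U$-statistic of order $\leq q$'' rigorous (equivalently, controlling the Leibniz expansion of $D^n$); by contrast the density reduction and the normalization ${\mathbb E}p(\eta)=1$ are immediate.
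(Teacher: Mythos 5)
Your proposal is correct, and its skeleton coincides with the paper's: reduce via ${\mathbb E}[\prod_j F_j(\mu)]={\mathbb E}[(\prod_j F_j(\eta))p(\eta)]$, apply (\ref{skaso}) to the pair $(\prod_j F_j,\,p)$, use ${\mathbb E}p(\eta)=1$ to collapse the leading term, and then prove the truncation $T_n\prod_j F_j=0$ for $n>q$. Where you genuinely diverge is in how the truncation is established. The paper expands $D_y(F G)$ for two $U$-statistics directly from the definition, argues that only terms containing the added point $y$ survive the cancellation, and concludes that after $k+l$ differences every argument slot is occupied by fixed points, so $D^{k+l}_{y_1,\dots,y_{k+l}}(FG)$ is deterministic (independent of the Poisson process) and the next difference vanishes; the case of $m$ factors is handled by the same reasoning. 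You instead offer two alternatives: (i) decompose the product of $U$-statistics into a finite linear combination of $U$-statistics of orders at most $q$ and invoke the pathwise vanishing $D^nF=0$ for $n$ exceeding the order, recorded after (\ref{ustd}); and (ii) the discrete Leibniz rule $D_y(FG)=(D_yF)G+F(D_yG)+(D_yF)(D_yG)$ iterated so that a nonzero term in $D^n$ requires distributing the $n$ differences with no factor receiving more than $k_j$, impossible for $n>q$. Route (i) is essentially the paper's Lemma \ref{Lem2} (and Remark 1), which the paper only proves \emph{after} Theorem \ref{th1}; since that lemma's proof is purely combinatorial and does not rely on Theorem \ref{th1}, there is no circularity, but you are in effect importing machinery the paper deliberately develops later for Theorem \ref{theo2}. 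Route (ii) is self-contained and arguably tighter than the paper's own argument: it turns the informal ``one place less for variables'' bookkeeping into an exact identity, at the modest cost of losing the slightly stronger observation the paper extracts, namely that $D^{q}$ of the product is not merely of vanishing difference but actually deterministic.
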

\noindent{\bf Proof:} Using formula (\ref{skaso}) with ${\mathbb E}p(\eta )=1$ we claim that \begin{equation}\label{nmn}T_n\prod_{j=1}^mF_j=0,\; n>q.\end{equation} For two $U$-statistics $F,G$ of order $k,l$ driven by $f,g,$ respectively, we have $$D_yFG(\eta )= 
\sum_{(x_1,\dots ,x_{k})\in(\eta\cup y)^k_{\neq}} f(x_1,\dots ,x_k)\sum_{(z_1,\dots ,z_l)\in(\eta\cup y)^k_{\neq}}g(z_1,\dots z_l)-$$$$-\sum_{(x_1,\dots ,x_{k})\in\eta^k_{\neq}} f(x_1,\dots ,x_k)\sum_{(z_1,\dots ,z_l)\in\eta^k_{\neq}}g(z_1,\dots z_l).$$ Only terms where $y$ is among variables (either in one or both sums) in the first product on the right side do not cancel with any term in the second product. Thus for the second difference there is one place less for variables (since $y$ is fixed). After $k+l$ differences all places are occupied and $D^{k+l}_{y_1,\dots ,y_{k+l}}$ is independent of the Poisson process. Therefore the $(k+l+1)$-st difference is zero and (\ref{nmn}) holds for a product of two functionals. From the same reasoning with more than two $U$-statistics (\ref{mn}) follows.
\hfill $\Box $\\
\begin{theorem}\label{theo1}
For a $U$-statistic $F\in L_2(P_\eta )$ of order $k$ and density $p\in L_2(P_\eta )$ it holds \begin{equation}\label{effa}{\mathbb E}F(\mu )=\int_{B^k}f(x_1,\dots ,x_k){\mathbb E}[\lambda^*_k(x_1,\dots ,x_k,\,\mu)]\lambda (d(x_1,\dots ,x_k)). \end{equation}  
\end{theorem}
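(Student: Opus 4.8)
The plan is to derive (\ref{effa}) from Theorem \ref{th1} specialized to a single functional, so that I never invoke the Georgii--Nguyen--Zessin formula directly and stay within the chaos-expansion machinery. Setting $m=1$ and $q=k$ in (\ref{mn}) gives at once
$${\mathbb E}F(\mu )={\mathbb E}F(\eta )+\sum_{n=1}^k\frac{1}{n!}\langle T_nF,T_np\rangle_n,$$
so the whole task reduces to recognizing the right-hand side of (\ref{effa}) as exactly this quantity.

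The key observation is that (\ref{tnp}) expresses $T_np$ as a Möbius transform of the expected conditional intensities over the boolean lattice of subsets of $\{1,\dots ,n\}$. Inverting it on that lattice, I would obtain $\lambda^n$-a.e.
$${\mathbb E}\lambda^*_n(y_1,\dots ,y_n,\mu )=\sum_{J\subset\{1,\dots ,n\}}T_{|J|}p(\{y_j,\:j\in J\}),$$
which is the crucial ingredient. Applying this with $n=k$ inside the integral in (\ref{effa}) rewrites its right-hand side as $\sum_{J\subset\{1,\dots ,k\}}\int_{B^k}f(x_1,\dots ,x_k)\,T_{|J|}p(\{x_j,\:j\in J\})\,\lambda (d(x_1,\dots ,x_k))$.

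Next I would collapse this subset sum by symmetry. Since both $f$ and $T_np$ are symmetric, every subset $J$ of a fixed cardinality $n$ contributes the same integral after relabeling, and there are $\binom{k}{n}$ of them; carrying out the $k-n$ integrations over the variables not indexed by $J$ recovers, via (\ref{tnf}), the factor $\frac{(k-n)!}{k!}T_nF$. Thus the size-$n$ block equals $\binom{k}{n}\frac{(k-n)!}{k!}\langle T_nF,T_np\rangle_n=\frac{1}{n!}\langle T_nF,T_np\rangle_n$, while the empty set ($n=0$) yields $T_0F\,T_0p={\mathbb E}F(\eta )$ because ${\mathbb E}p(\eta )=1$ and $\lambda^*_0\equiv 1$. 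Summing over $0\le n\le k$ reproduces precisely the expansion for ${\mathbb E}F(\mu )$ obtained from Theorem \ref{th1}, completing the argument.

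I expect the main obstacle to be the bookkeeping in the Möbius inversion together with the subsequent counting: one must verify that the inversion of (\ref{tnp}) holds $\lambda^k$-a.e. rather than merely formally, and that the symmetry reduction combined with the identity $\binom{k}{n}\frac{(k-n)!}{k!}=\frac{1}{n!}$ matches the weights in (\ref{mn}) term by term. The hypotheses $F,p\in L_2(P_\eta )$ ensure that all kernels $T_nF,T_np$ lie in $L_2(\lambda^n)$ and that the relevant scalar products are finite, so, the range of $n$ being finite, no convergence questions arise.
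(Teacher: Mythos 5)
Your proof is correct, and it reaches (\ref{effa}) from the same three ingredients as the paper --- Theorem \ref{th1} with $m=1$, Lemma \ref{pap}, and the kernel formula (\ref{tnf}) --- but it runs the combinatorics in the opposite direction. The paper substitutes (\ref{tnp}) and (\ref{tnf}) directly into (\ref{mn}), interchanges the order of summation, and then annihilates every term with $j<k$ using the alternating identity $\sum_{n=j}^k(-1)^{n-j}\binom{k}{n}\binom{n}{j}=0$, so that only the $j=k$ term survives. You instead invert (\ref{tnp}) on the subset lattice to get ${\mathbb E}\lambda^*_k(y_1,\dots ,y_k,\mu )=\sum_{J\subset \{1,\dots ,k\}}T_{|J|}p(\{y_j,\,j\in J\})$ (valid $\lambda^k$-a.e., since only finitely many instances of (\ref{tnp}) are involved), expand the right-hand side of (\ref{effa}), and match the size-$n$ block to $\frac{1}{n!}\langle T_nF,T_np\rangle_n$ by the positive count $\binom{k}{n}\frac{(k-n)!}{k!}=\frac{1}{n!}$. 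The two mechanisms are formally dual --- your M\"obius inversion is exactly the inverse of the signed sum in (\ref{tnp}), so the same inclusion--exclusion content is at work --- but your direction buys two things: the final assembly involves no cancellation of signed terms, and it makes transparent why only the top-order intensity $\lambda^*_k$ appears, namely because it absorbs all lower-order kernels $T_np$, $n<k$, under inversion. One shared caveat: like the paper, you interchange finitely many sums and integrals with $f$ only in $L_1(\lambda^k)$ and the kernels in $L_2(\lambda^n)$; this rearrangement deserves the same routine Fubini/Cauchy--Schwarz justification that the paper also leaves implicit, so it is not a gap relative to the paper's own standard of rigor.
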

\noindent{\bf Proof:} Denote $C_j^n$ the set of all combinations $c=\{ c_1,\dots ,c_j\}$ of distinct numbers from $\{1,\dots ,n\}.$
We put (\ref{tnf}) and (\ref{tnp}) into (\ref{mn}) with $m=1$ and obtain $${\mathbb E}F(\mu )= \sum_{n=0}^{k}\frac{1}{n!}\int_{B^n}\frac{k!}{(k-n)!}\times $$$$\int_{B^{k-n}} f(y_1,\dots ,y_n,x_1,\dots ,x_{k-n})\lambda (d(x_1,\dots ,x_{k-n}))\times$$$$\sum_{j=0}^n(-1)^{n-j}\sum_{c\in C_j^n} {\mathbb E}\lambda_j^*(y_{c_1},\dots ,y_{c_j},\mu )\lambda (d(y_1,\dots ,y_n))=$$\begin{equation}\label{bnom}= \sum_{j=0}^{k}\sum_{n=j}^{k}(-1)^{n-j}\binom{k}{n}\times \end{equation}$$\int_{B^{k}}\sum_{c\in C_j^n} {\mathbb E}\lambda_j^*(y_{c_1},\dots ,y_{c_j},\mu )f(y_1,\dots ,y_{k})\lambda (d(y_1,\dots ,y_k)).$$
The cardinality of $C_j^n$ is $\binom{n}{j}$ and the identity $$\sum_{n=j}^k(-1)^{n-j}\binom{k}{n}\binom{n}{j}=0,\; j<k$$ holds, see \cite{RefK}, p.39, identity 11. Thus for each fixed $j<k$ it follows that the inner sum over $n$ in (\ref{bnom}) vanishes, while the remaining value $j=k$ yields the result. \hfill $\Box $

For a function $h\in L_1(\lambda^k)$ not necessarily symmetric, the symmetrization $${\cal S}(h)(x_1,\dots ,x_{k})=\frac{1}{k!}\sum_{q\in{\cal T}_{k}}h(x_{q_1},\dots ,x_{q_{k}}),$$ where ${\cal T}_{k}$ is the set of all permutations of indices $1,\dots ,k,$ is a symmetric function.
We observe that \begin{equation}\label{smm}\sum_{(x_1,\dots ,x_{k})\in\mu^k_{\neq}}h(x_1,\dots ,x_k)=\sum_{(x_1,\dots ,x_{k})\in\mu^k_{\neq}}{\cal S}(h)(x_1,\dots ,x_{k})\end{equation} is a $U$-statistic of order $k.$
\begin{lemma}\label{Lem2} Let $m\in {\mathbb N},\; F_i$ be $U$-statistics of orders $k_i$ driven by functions $f_i,$ respectively, $i=1,\dots ,m,\; k_1\geq k_2\geq \dots \geq k_m.$ Then there exist functions $h_{k_1,j_2,\dots ,j_m}:\mathbb{R}^{k_1+\sum_{i=2}^mj_{i}}\longrightarrow[0,\infty),\; j_{i}=0,\dots,k_i,\; i=2,\dots ,m,$ such that \begin{equation}\label{ust2}\prod_{i=1}^mF_i(\mu )=\end{equation}$$=\sum_{j_{2},\dots,j_{m}}
A_{j_{2}:j_{m}}\sum_{(x_1,\dots ,x_{k_1+\sum_{i=2}^mj_{i}})\in\mu_{\neq}^{k_1+\sum_{i=2}^mj_{i}}}h_{k_1,j_2,\dots ,j_m}(x_1,\dots ,x_{k_1+\sum_{i=2}^mj_{i}})$$ where we sum over $j_{i}=0,\dots,k_i,\; i=2,\dots ,m$ and
\begin{equation}\label{acka}A_{j_{2}:j_{m}}=\prod_{l=2}^m\binom{k_l}{j_l}{k_1!(k_1+j_2)!\dots(k_1+\sum_{i=2}^{m-1}j_{i})!\over(k_1+j_2-k_2)!(k_1+j_2+j_3-k_3)!\dots(k_1+\sum_{i=2}^mj_{i}-k_m)!}.\end{equation}\end{lemma}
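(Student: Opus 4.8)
The plan is to prove the identity by induction on the number $m$ of factors, reducing at every stage to the product of exactly two $U$-statistics. Throughout I would exploit the symmetry of the driving functions to pass freely between ordered $k$-tuples of distinct points of $\mu$ and the underlying unordered $k$-point subsets, which makes the bookkeeping of overlaps transparent. A single factor of order $k$ equals $k!$ times a sum over $k$-point subsets, and the symmetrization identity (\ref{smm}) lets me rewrite any sum of a kernel over ordered distinct tuples as the corresponding sum of its symmetrization ${\cal S}(\cdot)$, which is again a $U$-statistic.

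\emph{Base case $m=2$.} For $F_1,F_2$ of orders $k_1\ge k_2$ driven by $f_1,f_2$ I would expand the product as the double sum $\sum_{\vec x\in\mu^{k_1}_{\neq}}\sum_{\vec z\in\mu^{k_2}_{\neq}}f_1(\vec x)f_2(\vec z)$ and partition it according to the number $j$ of points common to the two tuples; since $k_1\ge k_2$ this index runs over $0\le j\le k_2$, and I set $j_2=k_2-j$, the number of points of $\vec z$ absent from $\vec x$. The shared and the new points together form an ordered $(k_1+j_2)$-tuple of distinct points, so I introduce the glued kernel $g(w_1,\dots,w_{k_1+j_2})=f_1(w_1,\dots,w_{k_1})\,f_2(w_1,\dots,w_j,w_{k_1+1},\dots,w_{k_1+j_2})$ and define $h_{k_1,j_2}={\cal S}(g)$. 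By (\ref{smm}) the overlap-$j$ contribution equals a constant multiple of $\sum_{\vec w\in\mu^{k_1+j_2}_{\neq}}h_{k_1,j_2}(\vec w)$, and the constant is a pure counting factor: passing to unordered sets, a configuration with prescribed $f_1$-set, shared $j$-subset and new $j_2$-set is produced by $j!\,(k_1-j)!\,j_2!$ orderings $\vec w$, whereas the very same configuration (as a pair of sets with intersection of size $j$) arises $k_1!\,k_2!$ times in the original double sum; hence the multiplicity is $\tfrac{k_1!\,k_2!}{j!\,(k_1-j)!\,j_2!}=\binom{k_2}{j_2}\tfrac{k_1!}{(k_1+j_2-k_2)!}$, which is exactly $A_{j_2}$.

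\emph{Inductive step.} Suppose $\prod_{i=1}^{m-1}F_i(\mu)$ is already written as a sum of $U$-statistics indexed by $(j_2,\dots,j_{m-1})$, the term with a given index being a $U$-statistic of order $K=k_1+\sum_{i=2}^{m-1}j_i$ driven by a symmetric nonnegative kernel and carrying the coefficient $A_{j_2:j_{m-1}}$. Because $k_1\ge\dots\ge k_m$ one has $K\ge k_1\ge k_m$, so I may multiply each such term by $F_m$ and apply the base case with $k_1$ replaced by $K$ and $k_2$ by $k_m$; this splits the term into a sum over $j_m=0,\dots,k_m$ of $U$-statistics of order $K+j_m$ carrying the extra factor $\binom{k_m}{j_m}\tfrac{K!}{(K+j_m-k_m)!}$. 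Multiplying this into $A_{j_2:j_{m-1}}$ and using $K+j_m-k_m=k_1+\sum_{i=2}^m j_i-k_m$, the numerators $K!$ appearing at successive stages are precisely $k_1!,(k_1+j_2)!,\dots,(k_1+\sum_{i=2}^{m-1}j_i)!$, while the denominators assemble into $(k_1+j_2-k_2)!\cdots(k_1+\sum_{i=2}^m j_i-k_m)!$; together with the binomials this telescopes to $A_{j_2:j_m}$ of (\ref{acka}). The kernels compose by one further symmetrization, so they remain symmetric, and nonnegativity of $h_{k_1,j_2,\dots,j_m}$ follows since it is a symmetrization of products of the nonnegative $f_i$.

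\emph{Main obstacle.} The analytic content is light and the real work is combinatorial. The step I expect to need the most care is the base-case count, namely checking that the orderings absorbed into ${\cal S}$ and those absorbed into the multiplicity combine to give exactly the stated $A_{j_2}$ rather than some other positive constant, since the split between the coefficient and the normalization of $h$ is a matter of convention. The second delicate point is to keep (\ref{smm}) applicable at every stage, which rests entirely on the hypothesis $k_1\ge k_2\ge\dots\ge k_m$: it guarantees $K\ge k_m$, so the full overlap range $0\le k_m-j_m\le k_m$ is available; without the ordering the summation ranges, and hence the form of $A_{j_2:j_m}$, would change.
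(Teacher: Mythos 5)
Your proposal is correct and follows essentially the same route as the paper: induction on the number of factors, with the base case $m=2$ obtained by partitioning the double sum $\sum_{\vec x}\sum_{\vec z}f_1(\vec x)f_2(\vec z)$ according to the overlap size, invoking the symmetrization identity (\ref{smm}) to recognize each piece as a $U$-statistic, and then telescoping the coefficients $\binom{k_m}{j_m}K!/(K+j_m-k_m)!$ into (\ref{acka}) in the induction step. Your explicit double count via unordered configurations (the factor $k_1!k_2!/\bigl(j!\,(k_1-j)!\,j_2!\bigr)$) is simply a more transparent justification of the coefficient $\binom{k_2}{j_2}\binom{k_1}{k_1-k_2+j_2}(k_2-j_2)!$ that the paper asserts by its brief ordering argument.
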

{\bf Proof:} We proceed by induction in the number of functions $n=2,\dots ,m.$ For $n=2$ and $U$-statistics $F_1,F_2$ of orders $k_1,k_2$ driven by $f_1,f_2,$ respectively, $k_1\geq k_2,$ we have \begin{equation}\label{sdva}
F_1(\mu )F_2(\mu )=\sum_{j_2=0}^{k_2}\binom{k_2}{j_2}\frac{k_1!}{(k_1-k_2+j_2)!}\times\end{equation}$$\sum_{(x_1,\dots ,x_{k_1+j_2})\in\mu^{k_1+j_2}_{\neq}} f_2(x_1,\dots ,x_{k_2})f_1(x_1,\dots ,x_{k_2-j_2}, x_{k_2+1},\dots ,x_{k_1+j_2}),$$
since the product $F_1F_2$ of $U$-statistics is a sum of $k_2+1$ terms, which are sums (over $k_2+j_2$ distinct points from $\mu $) of products $f_2(x_1,\dots ,x_{k_2})f_1(y_1,\dots ,y_{k_1}),$ where $k_2-j_2$ variables appear simultaneously in both lists of variables of the product, $j_2=0,1,\dots ,k_2.$ Their first occurence is independent of the order (since all orders are present in the inner sum of (\ref{sdva})) while their second occurence is dependent on the order. Therefore coefficients at the inner sums are equal to $$\binom{k_2}{j_2}\binom{k_1}{k_1-k_2+j_2}(k_2-j_2)!,\;j_2=0,1,\dots ,k_1,$$ and denoting $$h_{k_1,j}(x_1,\dots ,x_{k_1+j})=f_2(x_1,\dots ,x_{k_2})f_1(x_1,\dots ,x_{k_2-j}, x_{k_2+1},\dots ,x_{k_1+j})$$ leads to the result for $n=2.$
We can use the symmetrization argument (\ref{smm}) to claim that the inner sum (\ref{sdva}) is a $U$-statistic for each $j_2=0,\dots ,k_2.$ 
Further let (\ref{ust2}) and (\ref{acka}) hold for $m-1$ and we consider the product $$\prod_{i=1}^{m-1}F_i(\mu )F_m(\mu ).$$ We have $k_m\leq k_1+\sum_{i=2}^ {m-1}j_i$ for any $j_i=0,\dots ,k_i,\; i=2,\dots ,m-1,$ so using the same argument as above in the case $n=2$ to any term in the outer sum of $\prod_{i=1}^{m-1}F_i(\mu )$ when multiplied by $F_m(\mu )$ the induction step is finished. \hfill $\Box $\\
 
\noindent {\bf Remark 1} Lemma \ref{Lem2} shows how to compute coefficients at the terms of the product explicitly. Instead of trying to express functions $h_{k_1,j_2,\dots ,j_{m}}$ by means of functions $f_i$ we can use a short expression given by diagrams \cite{RefT}, \cite{RefL}. Let $[k]=\{1,\dots ,k\},$ denote $\Pi_k$ the set of all  partitions $\{J_i\}$ of $[k],$ where $J_i$ are disjoint blocks and $\cup J_i=[k].$ For
$k=k_1+\dots +k_m$ and blocks $$J_i=\{ j: k_1+\dots +k_{i-1}< j\leq k_1+\dots +k_i\},\; i=1,\dots ,m,$$ consider the partition $\pi =\{J_i,\;1\leq i\leq m\}$ and let
$\Pi_{k_1,\dots ,k_m}\subset \Pi_k$ be the set of all partitions $\sigma\in\Pi_k$ such that $|J\cap J'|\leq 1$ for all $J\in\pi $ and all $J'\in\sigma .$ Here $|J|$ is the cardinality of a block $J\in\sigma .$ 
For a partition $\sigma\in\Pi_{k_1\dots k_m}$ we define the function $(\otimes_{j=1}^mf_j)_\sigma:B^{|\sigma |}\rightarrow {\mathbb R}$ by replacing all variables of the tensor product $\otimes_{j=1}^mf_j$ that belong to the same block of $\sigma $ by a new common variable, $|\sigma |$ is the number of blocks in $\sigma .$
 Under the assumptions of Lemma \ref{Lem2} we have \begin{equation}\label{divis}\prod_{i=1}^mF_i(\mu )=\sum_{\sigma\in\Pi_{k_1\dots k_m}}\sum_{(x_1,\dots ,x_{|\sigma |})\in\mu_{\neq}^{|\sigma |}}(\otimes_{i=1}^m f_i)_{|\sigma |}(x_1,\dots ,x_{|\sigma |}).\end{equation} This is demonstrated by the fact that $\sum_{j_{2},\dots,j_{m}}
A_{j_{2}:j_{m}}=card \prod_{k_1\dots k_m}$ which is proved by induction in $m,$ for $m=1$ we have $card \prod_{k_1}=1.$ Induction step $m-1\rightarrow m$ follows since for a new block $J_m\in\pi $ with cardinality $k_m$ and $0\leq j_m\leq k_m$ the term $\binom{k_m}{j_m}$ yields the number of combinations of $j_m$ blocks $J$ of partitions $\sigma\in\prod_{k_1\dots k_m}$ with $|J|=1$ (subsets of $J_m$) and the term $$\frac{(k_1+\sum_{i=2}^{m-1}j_{i})!}{(k_1+\sum_{i=2}^mj_{i}-k_m)!}$$ contributes to the number of partitions $\sigma\in\prod_{k_1\dots k_m}$ when the remaining $k_m-j_m$ items in $J_m$ participate in blocks with $|J|\geq 2.$ 
\begin{theorem}\label{theo2}Let $m\in\mathbb{N},\;\prod_{i=1}^mF_i\in L_2(P_{\eta}),$ $p\in L_2(P_\eta),$ where $F_i$ are $U$-statistics of orders $k_i$ driven by nonnegative functions $f_i,$ respectively, $i=1,\dots,m$. Then \begin{equation}\label{divi}{\mathbb E}\prod_{i=1}^mF_i(\mu )=\sum_{\sigma\in\prod_{k_1\dots k_m}}\int_{B^{|\sigma |}}(\otimes_{i=1}^mf_i)_{|\sigma |}(x_1,\dots ,x_{|\sigma |})\times \end{equation}$$\times {\mathbb E}\lambda^*_{|\sigma |}(x_1,\dots ,x_{|\sigma |};\mu )\lambda(d(x_1,\dots ,x_{|\sigma |}))$$\end{theorem}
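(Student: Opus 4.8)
The plan is to reduce the multivariate statement to the single $U$-statistic case already settled in Theorem \ref{theo1}. The key structural input is the partition expansion (\ref{divis}) from Remark 1, which rewrites the product $\prod_{i=1}^m F_i(\mu )$ as a \emph{finite} sum, indexed by the partitions $\sigma\in\Pi_{k_1\dots k_m}$, of $U$-statistics of order $|\sigma |$, each driven by the contracted kernel $(\otimes_{i=1}^m f_i)_{|\sigma |}$. Once this is in hand, the right-hand side of (\ref{divi}) is exactly what one obtains by applying Theorem \ref{theo1} to each summand and adding the results, so the whole task is to justify the term-by-term passage to expectations. The combinatorics has already been absorbed into (\ref{divis}); what remains is linearity plus integrability bookkeeping.

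Concretely, I would first invoke (\ref{divis}) to write $\prod_{i=1}^m F_i=\sum_{\sigma}S_\sigma$, where $S_\sigma$ denotes the $U$-statistic of order $|\sigma |$ driven by $(\otimes_{i=1}^m f_i)_{|\sigma |}$ (symmetrized if necessary via (\ref{smm}), which changes none of the integrals below since $\lambda^*_{|\sigma |}$ is symmetric). Taking expectations under $P_\mu$ and using the finiteness of the index set $\Pi_{k_1\dots k_m}$, linearity of expectation gives $\mathbb{E}\prod_{i=1}^m F_i(\mu )=\sum_\sigma \mathbb{E}S_\sigma(\mu )$. Applying Theorem \ref{theo1} to each $S_\sigma$ with $k=|\sigma |$ and $f=(\otimes_{i=1}^m f_i)_{|\sigma |}$ then produces the corresponding integral of $(\otimes_{i=1}^m f_i)_{|\sigma |}$ against $\mathbb{E}\lambda^*_{|\sigma |}$, and summing over $\sigma$ yields (\ref{divi}).

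The step that requires care, and the place where the nonnegativity hypothesis on the $f_i$ is genuinely used, is the simultaneous validity of the splitting of the expectation and of the applicability of Theorem \ref{theo1} to each $S_\sigma$. Because every $f_i\geq 0$, each contracted kernel and hence each $S_\sigma$ is nonnegative, and $0\leq S_\sigma(\eta )\leq \prod_{i=1}^m F_i(\eta )$ pointwise. Since $\prod_{i=1}^m F_i\in L_2(P_\eta )$ by assumption, this domination forces $S_\sigma\in L_2(P_\eta )$ for every $\sigma$, which is precisely the hypothesis needed to invoke Theorem \ref{theo1}. The same nonnegativity lets me split the finite sum of expectations without risking an $\infty-\infty$ indeterminacy: by the Cauchy--Schwarz inequality $\mathbb{E}[\prod_{i}F_i(\eta )p(\eta )]\leq \|\prod_{i}F_i\|_{L_2(P_\eta )}\,\|p\|_{L_2(P_\eta )}<\infty$, so by Lemma \ref{stva} the quantity $\mathbb{E}\prod_{i}F_i(\mu )$ is finite and each nonnegative summand $\mathbb{E}S_\sigma(\mu )$ is finite as well. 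I expect this integrability bookkeeping, rather than any new identity, to be the main (and only) obstacle.
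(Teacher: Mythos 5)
Your proposal is correct and follows essentially the same route as the paper: decompose the product via the partition formula (\ref{divis}), note each inner sum is a $U$-statistic after symmetrization (\ref{smm}), use nonnegativity of the $f_i$ to conclude each such $U$-statistic lies in $L_2(P_\eta)$ (your pointwise domination $0\leq S_\sigma\leq\prod_i F_i$ is the same fact the paper extracts by squaring (\ref{divis})), and then apply Theorem \ref{theo1} term by term. Your explicit Cauchy--Schwarz check that ${\mathbb E}\prod_i F_i(\mu)$ is finite, and the remark that symmetrization leaves the integrals unchanged because $\lambda^*_{|\sigma|}$ is symmetric, are small pieces of bookkeeping the paper leaves implicit, but they do not change the argument.
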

 \noindent{\bf Proof:} In formula (\ref{divis}) each term \begin{equation}\label{ftl}\sum_{(x_1,\dots ,x_{|\sigma |})\in\mu_{\neq}^{|\sigma |}}(\otimes_{i=1}^m f_i)_{|\sigma |}(x_1,\dots ,x_{|\sigma |})=\end{equation}$=\sum_{(x_1,\dots ,x_{|\sigma |})\in\mu_{\neq}^{|\sigma |}}{\cal S}((\otimes_{i=1}^m f_i)_{|\sigma |})(x_1,\dots ,x_{|\sigma |})$ is a $U$-statistic by symmetrization. If we square formula (\ref{divis}) with $\eta $ instead of $\mu ,$ the expectation of right hand side is finite, which sums only nonnegative terms and involves squares of the inner sums of (\ref{divis}). Therefore each corresponding functional belongs to $L_2(P_{\eta}),$ we can apply Theorem \ref{theo1} to all inner sums of (\ref{divis}) from which the result follows. \hfill $\Box $\\ 
{\bf Remark 2} 
 Specially we have for $m=2:$ \begin{equation}\label{prodva}{\mathbb E}[F_1(\mu )F_2(\mu )]=\sum_{j=0}^{k_2}\binom{k_2}{j}\frac{k_1!}{(k_1-k_2+j)!}\times\end{equation}
$$\times\int_{B^{k_1+j}}h_{k_1,j}(x_1,\dots ,x_{k_1+j}){\mathbb E}[\lambda^*_{k_1+j}(x_1,\dots ,x_{k_1+j},\,\mu)]\lambda (d(x_1,\dots ,x_{k_1+j})).$$ Formula (\ref{divi}) has an analogous structure (including higher-order conditional intensities) as the formula in Proposition 3.1 in \cite{RefD} (derived from Georgii-Nguyen-Zessin formula), where the integrated functions have a simpler form. While this cited paper has a more general background, our present paper is directed to explicit results for $U$-statistics and applications in stochastic geometry.

The assumptions of the above Theorems can be verified using formula for the expectation of a nonnegative functional of a Poisson process, see \cite{RefW}, p.15: \begin{equation} \label{odhad} {\mathbb E}[F(\eta )]=e^{-\lambda (B)}\sum_{n=0}^\infty\frac{1}{n!}\int_B\dots\int_B F(u_1,\dots ,u_n)\lambda (d(u_1,\dots ,u_n)). \end{equation} 
\begin{example}\label{ex2}
Consider $k=1,\; C\subset B$ measurable and $U$-statistic $$F(\mu )=\sum_{y\in\mu }f(y)=\mu (C),\quad f(y)=\,1_{[y\in C]}.$$
Let $\beta >0,\; 0\leq\gamma\leq 1,\; r>0$ be parameters, $\mu $ a Strauss point process \cite{RefA} on $B\subset {\mathbb R}^d$ bounded 
with density \begin{equation}\label{exy} p(x)=\alpha\beta^{n(x)}\gamma^{s(x)},\quad s(x)=\sum_{y,z\in x^2_{\neq }}\,1_{[||z-y||\leq r]},\end{equation} w.r.t. the Poisson point process with Lebesgue intensity measure $\lambda ,\;\alpha $ is the normalizing constant, $n(x)$ the number of points in $x.$ Here conditional intensities $$\lambda^*(u,x)=\beta\gamma^{t(u,x)},\quad \lambda^*_2(y_1,y_2,x)=\beta^2\gamma^{1_{[||y_1-y_2||\leq r]}}\gamma^{t(y_1,x)+t(y_2,x)},$$ where $t(u,x)=\sum_{y\in x}\,1_{[||u-y||\leq r]}.$ The assumptions of
Theorems \ref{theo1} and \ref{theo2} are verified using (\ref{odhad}), since e.g. $p^2(x)\leq\alpha^2\beta^{2n(x)}$ and $\sum_{n=0}^\infty \frac{\beta^{2n}\lambda(B)^n}{n!}<\infty ,$ analogously for $F^2,\: F^4.$ Thus we obtain $${\mathbb E}\mu (C)=\beta\int_C {\mathbb E}[\gamma^{t(y,\mu )}]\lambda (dy),$$$${\mathbb E} [\mu (C)^2]=\beta\int_C {\mathbb E}[\gamma^{t(y,\mu )}]\lambda (dy)+$$$$+\beta^2\int_C\int_C\gamma^{1_{[||y_1-y_2||\leq r]}}{\mathbb E}[\gamma^{t(y_1,\mu )+t(y_2,\mu )}]\lambda(d(y_1,y_2)).$$
\end{example}
\begin{example}\label{ex3}
The special case of Strauss process with $\gamma =1$ in (\ref{exy}) is Poisson process $\eta_\beta $ with deterministic constant conditional intensities $\lambda_n^*(u,\eta_\beta )=\beta^n,$ $n=1,2,\dots $ and constant intensity function $\beta ,$ cf. (\ref{inyt}). An easy exercise is to verify that formula (\ref{vari}) for $\eta_\beta $ is a special case of (\ref{prodva}).
\end{example}
\subsection{Functionals in logaritmic form}
In Lemma \ref{stva} we used the relation $${\mathbb E}F^m(\mu )={\mathbb E}[F^m(\eta )p(\eta )],\; m=1,2,\dots ,$$ where $\eta $ is a Poisson process and $\mu $ a point process with probability density $p$ w.r.t. $\eta .$ Consider a functional on $\mathbf N$ \begin{equation}\label{gk}H_m=\log (F^mp)=m\log F+\log p,\; m=1,2,\dots \end{equation} under the assumption $H_m\in L_1(P_\eta ).$ From Jensen inequality we have \begin{equation}\label{jens}\log {\mathbb E}F^m(\mu )\geq {\mathbb E}H_m(\eta ).\end{equation}
 
According to Theorem 4.3 in \cite{RefA} $\lambda^*(u,x),\:x\in {\mathbf N},\: u\in B,$ is a conditional intensity of a point process $\mu $ satisfying (\ref{hered}) if and only if it can be expressed in the form \begin{equation}\label{cig}
\lambda^*(u,x)=\exp \left[V_1(u)+\sum_{y\in x}V_2(u,y)+\sum_{(y_1,y_2)\in x^2_{\neq }}V_3(u,y_1,y_2)+\dots\right],\end{equation}
where $V_k:B^k\rightarrow {\mathbb R}\cup \{-\infty\} $ is called the potential of order $k.$ Then the density is that of a Gibbs process \begin{equation}\label{gid}p(x)=\exp\left[
V_0+\sum_{y\in x}V_1(y)+\sum_{(y_1,y_2)\in x^2_{\neq }}V_2(y_1,y_2)+\dots\right].\end{equation}  
Consequently $$\log p(x)=V_0+\sum_{y\in x}V_1(y)+\sum_{(y_1,y_2)\in x^2_{\neq }\subset x}V_2(y_1,y_2)+\dots $$ is a sum of a constant and $U$-statistics. 

Assume that there is only a finite number $l$ of sums on the right side of (\ref{cig}) and further that \begin{equation}\label{prf}F(\eta )=\exp\left[\sum_{(x_1,\dots ,x_k)\in\eta^k_{\neq }} f(x_1,\dots ,x_k)\right].\end{equation} Then $\log F$ is a $U$-statistic of order $k$ and $H_m$ is a finite sum of $U$-statistics.

\section{Stochastic geometry functionals}
Let $B\subset {\mathbb R}^l,\,l\in {\mathbb N}$ be a bounded Borel set with positive Lebesgue measure, $X$ a germ-grain process \cite{RefS} of germs $z\in B$ and compact grains $K_z\subset {\mathbb R}^l,$ typically $z\in K_z.$ For a realization $x$ of the germ-grain process denote $U_x$ the union of all grains. Consider a probability density \cite{RefW} \begin{equation}\label{den}p(x)=c_\nu ^{-1}\exp(\nu G(U_x)), \end{equation}of $X$
w.r.t. a given reference Poisson point process $\eta .$ Here $\nu =(\nu_1,\dots ,\nu_d)$ is a vector of real parameters, $c_\nu $ a normalizing constant, $G(U_x)\in {\mathbb R}^d$ is a vector of geometrical characteristics of $U_x.$ In the exponent of (\ref{den}) there is the inner (scalar) product in ${\mathbb R}^d.$
The largest set of ${\nu}$ such that exponential family density (\ref{den}) is well defined is
$\{\nu\in\mathbb{R}^d : \mathbb{E}[\exp(\nu G(U_\eta ))] <\infty\},$ see \cite{RefW}.
For a vector of geometrical characteristics $G(U_x)=(G_1({U_x}),\dots,G_r({U_x})), r\in\mathbb{N}$
denote $$D^m_{y_1,\dots,y_m}G(U_x)=(D^m_{y_1,\dots,y_m}G_1(U_{x}),\dots,D^m_{y_1,\dots,y_m}G_r(U_{x}))^T$$
the vector of $m-$th differences.
\begin{theorem}\label{Th:papan}Consider the probability density (\ref{den}).
 Then for the corresponding Papangelou conditional
intensity $\lambda^*_m$ of order $m\in {\mathbb N}$ and $x\in{\mathbf N}$ it holds
\begin{equation}
\lambda^*_m(y_m,\dots,y_1,x)=e^{\nu Q_mG(U_x)}\qquad
a.s.,
\end{equation}
where
\begin{eqnarray*}
Q_mG(U_x)&=&D^m_{y_1,\dots,y_m}G(U_x)\\
&&+\sum_{i_1,\dots,i_{m-1}\in\{1,\dots,m\}}D^{m-1}_{y_{i_1},\dots,y_{i_{m-1}}}G(U_x)+\dots+
\sum_{1\leq i\leq m}D_{y_i}G(U_x).
\end{eqnarray*}
\end{theorem}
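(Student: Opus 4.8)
The plan is to reduce the statement to a purely combinatorial identity for the difference operators and then settle it by inclusion--exclusion on the Boolean lattice of subsets of $\{1,\dots,m\}$. Since $p(x)=c_\nu^{-1}\exp(\nu G(U_x))$ is strictly positive for every configuration $x$ (the exponential never vanishes), the heredity condition (\ref{hered}) holds automatically and $\lambda^*_m$ is well defined for all $x$; the qualifier ``$a.s.$'' only reflects that the added points $y_1,\dots,y_m$ are distinct and avoid $x$ with probability one. First I would insert the density directly into the definition of the conditional intensity of order $m$, namely
$$\lambda^*_m(y_1,\dots,y_m,x)=\frac{p(x\cup\{y_1,\dots,y_m\})}{p(x)}=\exp\!\big(\nu[G(U_{x\cup\{y_1,\dots,y_m\}})-G(U_x)]\big),$$
where the normalizing constant $c_\nu$ cancels. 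Hence the whole theorem is equivalent, coordinatewise in $G$, to the identity
$$G(U_{x\cup\{y_1,\dots,y_m\}})-G(U_x)=Q_mG(U_x).$$

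To prove this identity I would write $g_J:=G(U_{x\cup\{y_j:\,j\in J\}})$ for $J\subseteq\{1,\dots,m\}$, so that $g_\emptyset=G(U_x)$ and $g_{\{1,\dots,m\}}=G(U_{x\cup\{y_1,\dots,y_m\}})$, and use the inclusion--exclusion form of the iterated difference already recalled in the proof of Lemma \ref{pap}, applied to an arbitrary index set $S$:
$$D^{|S|}_{(y_j)_{j\in S}}G(U_x)=\sum_{J\subseteq S}(-1)^{|S|-|J|}g_J.$$
Reading $Q_mG$ as the sum of these differences over all non-empty subsets $S\subseteq\{1,\dots,m\}$, I would substitute this expansion and interchange the two summations to collect the coefficient of each $g_J$. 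For a fixed $J$ the admissible $S$ are exactly the non-empty supersets of $J$, so writing $T=S\setminus J$ the coefficient of $g_J$ equals $\sum_{T\subseteq\{1,\dots,m\}\setminus J}(-1)^{|T|}$ when $J\neq\emptyset$, and $\big(\sum_{S}(-1)^{|S|}\big)-1$ when $J=\emptyset$. By the binomial identity $\sum_{T\subseteq A}(-1)^{|T|}=(1-1)^{|A|}$ every coefficient vanishes except the two extreme cases: $J=\{1,\dots,m\}$ contributes $+1$ and $J=\emptyset$ contributes $-1$. This leaves precisely $g_{\{1,\dots,m\}}-g_\emptyset$, which is the desired identity.

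The only genuinely delicate point is bookkeeping, namely making sure the sums defining $Q_mG$ run over subsets (each unordered $k$-subset counted once) rather than over ordered tuples; since $D^k$ is symmetric in its arguments this is the reading under which the cancellation works, and it is the one consistent with the final term $\sum_{1\leq i\leq m}D_{y_i}G(U_x)$. Equivalently, and perhaps more transparently, I could first establish the finite-difference expansion $H(x\cup\{y_1,\dots,y_m\})=\sum_{S\subseteq\{1,\dots,m\}}D^{|S|}_{(y_j)_{j\in S}}H(x)$ valid for an arbitrary functional $H$ (by induction on $m$, or as the M\"obius inverse of the inclusion--exclusion formula) and then subtract the $S=\emptyset$ term $D^0H(x)=H(x)$; applying this with $H(\cdot)=G(U_{\,\cdot})$ produces $Q_mG(U_x)$ on the right and $G(U_{x\cup\{y_1,\dots,y_m\}})-G(U_x)$ on the left simultaneously. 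Either route closes the argument, and no analytic input beyond the cancellation of $c_\nu$ is required.
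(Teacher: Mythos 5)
Your proof is correct, and it reduces the theorem to exactly the same key identity as the paper --- equation (\ref{kvm}), i.e.\ $Q_mG(U_x)=G(U_{x\cup\{y_1,\dots,y_m\}})-G(U_x)$, after the normalizing constant $c_\nu$ cancels --- but it proves that identity by a genuinely different argument. The paper proceeds by induction on $m$: it splits $Q_mG$ into the terms not involving $y_m$ (which form $Q_{m-1}G$, handled by the induction hypothesis) and the terms involving $y_m$ (which it rewrites as $D^1_{y_m}G(U_x)+D^1_{y_m}Q_{m-1}G(U_x)$), and then telescopes. You instead expand each difference via the inclusion--exclusion formula $D^{|S|}_{(y_j)_{j\in S}}G(U_x)=\sum_{J\subseteq S}(-1)^{|S|-|J|}G(U_{x\cup\{y_j:\,j\in J\}})$ --- the same formula the paper itself quotes from \cite{RefJ} in the proof of Lemma \ref{pap} --- interchange the summations, and annihilate every intermediate coefficient with $(1-1)^{|A|}=0$, leaving only the two extreme subsets. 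What your route buys: it is non-inductive, a single closed computation; it makes transparent why only $J=\{1,\dots,m\}$ and $J=\emptyset$ survive; and it yields at no extra cost the general M\"obius-type expansion $H(x\cup\{y_1,\dots,y_m\})=\sum_{S\subseteq\{1,\dots,m\}}D^{|S|}_{(y_j)_{j\in S}}H(x)$ for an arbitrary functional $H$, of which the theorem is the special case $H(x)=G(U_x)$. What the paper's induction buys: it is self-contained (it never needs the subset expansion of iterated differences) and it exhibits the recursion $Q_m=Q_{m-1}+D^1_{y_m}+D^1_{y_m}Q_{m-1}$ that drives the telescoping. Your side remark is also well taken: the sums in the theorem's displayed definition of $Q_m$ must be read over unordered subsets of indices (each $k$-subset counted once), which is the reading the paper itself uses in its display (\ref{kvmn}), and the only one under which either proof of the cancellation goes through.
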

\begin{proof}
We have for $x\in{\mathbf N}$
$$\lambda^*_m(y_1,\dots,y_m,x)={p_{\nu}(x\cup\{y_1,\dots,y_m\})\over p_{\nu}(x)}=e^{\nu G({U_x\cup\{y_1,\dots,y_m\}})-\nu G(U_x)}.$$
We need to prove that
\begin{equation}\label{kvm}Q_mG(U_x)=G({U_x\cup\{y_1,\dots,y_m\}})-G(U_x).\end{equation}
For $m=1$ we have
$$\lambda^{*}_1(y;x)={e^{\nu G({U_x\cup \{y\}})}\over e^{\nu
G(U_x)}}=e^{\nu(G({U_x\cup
\{y\}})-G(U_x))}=e^{\nu D^1_y
G(U_x)}=e^{\nu Q_1G(U_x)}.$$
Now assume that the formula (\ref{kvm}) holds for $m-1$ and we shall prove it for $m.$ Firstly split $Q_mG(U_x):$
\begin{equation}\label{kvmn}
Q_mG(U_x)=D^1_{y_m}G(U_x)+\end{equation}$$
+\sum_{j=1}^{m-1}D^2_{y_j,y_m}G(U_x)+\sum_{1\leq
i<j\leq m-1}D^3_{y_i,y_j,y_m}G(U_x)+\dots+D^m_{y_1,\dots,y_m}G(U_x)+$$$$
+\sum_{j=1}^{m-1}D^1_{y_j}G(U_x)+\sum_{1\leq
i<j\leq
m-1}D^2_{y_i,y_j}G(U_x)+\dots+D^{m-1}_{y_1,\dots,y_{m-1}}G(U_x).
$$
From the assumption the third line of (\ref{kvmn}) is equal to $Q_{m-1}G(U_x)$ and further
$$
Q_mG(U_x)=G({U_x\cup\{y_1,\dots,y_{m-1}\}})-G(U_x)+D^1_{y_m}G(U_x)+$$$$
+D^1_{y_m}\biggl(\sum_{j=1}^{m-1}D^1_{y_j}G(U_x)+\sum_{1\leq i<j\leq
m-1}D^2_{y_i,y_j}G(U_x)+\dots+D^{m-1}_{y_1,\dots,y_{m-1}}G(U_x)\biggr)
=$$$$
=G({U_x\cup\{y_1,\dots,y_{m-1}\}})-G(U_x)+D^1_{y_m}G(U_x)+$$$$+D^1_{y_m}\bigl(G({U_x\cup\{y_1,\dots,y_{m-1}\}})-G(U_x)\bigr)$$$$
=G({U_x\cup\{y_1,\dots,y_{m-1}\}})-G(U_x)+D^1_{y_m}G(U_x)+G({U_x\cup\{y_1,\dots,y_m\}})-$$$$-G({U_x\cup\{y_1,\dots,y_{m-1}\}})-D^1_{y_m}G(U_x)
=G({U_x\cup\{y_1,\dots,y_m\}})-G(U_x).
$$
\end{proof}
\subsection{Particular models}
The intensity of the reference process depends on a specific model, see \cite{RefM} for interacting discs. Here we consider process of interacting segments in ${\mathbb R}^2$ or interacting plates in ${\mathbb R}^3$ where we study natural $U$-statistics. Consider first $B\subset {\mathbb R}^2,$ \begin{equation}\label{bck}Y=B\times (0,b]\times [0,\pi ),\end{equation} where $b>0$ is an upper bound for the segment length. The Poisson process $\eta $ on $Y$ has intensity measure $\lambda ,$ \begin{equation}\label{inti}\lambda (d(z, r, \phi ))=\rho (z)dzQ(dr)V(d\phi ),\end{equation} where $z$ denotes the location of the segment centre, $r$ the segment length and $\phi $ its axial orientation, $Q,V$ are probability measures, $V$ nondegenerate, $\rho $ a bounded intensity function of germs on $B.$ The segment process $\mu $ has the density (\ref{den}) with $\nu = (\nu_1,\nu_2 ),$ we assume $\nu_2\leq 0$ tu guarantee that $p$ is a probability density.
Further \begin{equation}\label{gseg}G(U_x)=(L(U_x),\: N(U_x)),\end{equation} where $L$ is the total length of all segments and $N$ the total number of intersections between segments. Thus if $l$ is the length of an individual segment
\begin{equation}\label{lsg}L(U_\mu )=\sum_{s\in\mu }l(s)\end{equation} is $U$-statistic of the first order and \begin{equation}\label{nsg}N(U_\mu )=\frac{1}{2}\sum_{(s,t)\in\mu^2_{\neq } }1_{[s\cap t\neq\emptyset ]}\end{equation} is $U$-statistic of the second order. 

Similarly we consider $B\subset {\mathbb R}^3$ and a Poisson process $\eta $ in \begin{equation}\label{bcp}Y=B\times (0,b]\times {\mathbb S}^2,\end{equation} where $b>0$ is an upper bound for the plate radius and ${\mathbb S}^2$ is the unit hemisphere in ${\mathbb R}^3,$ with intensity measure $\lambda $ on $Y$ $$\lambda (d(z, r, \phi ))=\rho (z)dzQ(dr)V(d\phi )$$ where $z$ denotes the location of circular plate centre, $r$ the radius of the plate and $\phi $ its normal orientation.
The point process $\mu $ of circular plates has the density (\ref{den}) w.r.t. $\eta $ with $\nu = (\nu_1,\nu_2,\nu_3 ),$ we assume $\nu_2\leq 0,\;\nu_3\leq 0.$
Further
$$G(U_x)=(S(U_x),\: L(U_x),\: N(U_x)),$$ where $S$ is the total area of plates, $L$ the total length of intersection lines and $N$ the total number of intersection points of triplets of plates. Let $A$ be the area of a single plate, $l$ the length of a single intersection segment, we define
$$S(U_\mu )=\sum_{s\in\mu }A(s),\quad L(U_\mu )=\frac{1}{2}\sum_{(s,t)\in\mu^2_{\neq } }l(s\cap t)$$ which are $U$-statistics of the first, second order, respectively, and $$N(U_\mu )=\frac{1}{6}\sum_{(s,t,u)\in\mu^3_{\neq }
 }1_{[(s\cap t\cap u)\neq\emptyset ]}$$ is $U$-statistic of the third order. 
 
In the following we obtain formulas for the moments of these functionals defined for segment and plate processes. Consider the plate process, for $x\in {\mathbf N},\:y,y_i\in Y,\;y,y_i\notin x$ we have 
$$D_yG(U_x)=\left(\begin{array}{c}D_yS(U_x )\\D_yL(U_x )\\D_yN(U_x )\end{array}\right)=\left(\begin{array}{c}A(y)\\\sum_{s\in x }l(s\cap y)\\\sum_{s,t\in x^2_{\neq }}1_{[s\cap t\cap y\neq\emptyset ]}\end{array}\right),$$$$D^2_{y_1y_2}G(U_x)=\left(\begin{array}{c}0\\l(y_1\cap y_2)\\\sum_{s\in\mu }1_{[s\cap y_1\cap y_2\neq\emptyset ]}\end{array}\right),$$$$D^3_{y_1y_2y_3}G(U_x)=\left(\begin{array}{c}0\\0\\1_{[y_1\cap y_2\cap y_3\neq\emptyset ]}\end{array}\right),$$ higher order diferences are equal to zero.
Denote ${\cal E}_\mu (y)=\exp (\nu D_yG(U_\mu )),$$${\cal E}_\mu (y_1,y_2)=\exp (\nu (D_{y_1}G(U_\mu )+D_{y_2}G(U_\mu )+D^2_{y_1y_2}G(U_\mu ))),$$${\cal E}_\mu (y_1,\dots ,y_m)=\exp (\nu Q_m),\;m=3,...$ where $$Q_m=\sum_{i=1}^m
D_{y_i}G(U_\mu )+\sum_{1\leq i<j\leq m}D^2_{y_iy_j}G(U_\mu )+\sum_{1\leq i<j<l\leq m}D^3_{y_iy_jy_l}G(U_\mu ).$$
From Theorem 4 we obtain the following.
\begin{corollary} For $n\in {\mathbb N}$ we have for the plate process $\mu $ with density (\ref{den}), $\nu_2\leq 0,\;\nu_3\leq 0,$ the conditional intensity of order $n$ $$\lambda_n^*(y_1,\dots ,y_n,\mu )={\cal E}_\mu (y_1,\dots ,y_n)\; a.s. $$
\end{corollary}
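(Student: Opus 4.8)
The plan is to apply Theorem \ref{Th:papan} directly and then exploit the fact that the vector of geometric characteristics $G=(S,L,N)$ of the plate process is assembled from $U$-statistics of bounded order. Theorem \ref{Th:papan} already yields $\lambda^*_n(y_1,\dots,y_n,\mu)=e^{\nu Q_nG(U_\mu)}$ a.s., so the entire task reduces to showing that the general expression $Q_nG(U_\mu)$ collapses to the quantity $Q_n$ appearing in the definition of ${\cal E}_\mu(y_1,\dots,y_n)$.

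First I would observe that each component of $G$ is a $U$-statistic: $S$ is of order $1$, $L$ of order $2$, and $N$ of order $3$. By formula (\ref{ustd}), a $U$-statistic of order $k$ has vanishing differences $D^j=0$ for every $j>k$. Applying this componentwise, every difference $D^j_{\cdots}G$ of order $j\geq 4$ is the zero vector; this is the structural fact already recorded in the display preceding the statement, where higher-order differences of $G$ are set to zero.

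Next I would substitute this vanishing into the general formula for $Q_nG(U_\mu)$ from Theorem \ref{Th:papan}, which is the sum over all orders $j=1,\dots,n$ of the differences $D^j$ indexed by the $j$-subsets of $\{1,\dots,n\}$. All terms with $j\geq 4$ drop out, leaving exactly the contributions of orders $1$, $2$, and $3$: the singletons give $\sum_{i=1}^n D_{y_i}G(U_\mu)$, the $2$-subsets give $\sum_{1\leq i<j\leq n} D^2_{y_iy_j}G(U_\mu)$, and the $3$-subsets give $\sum_{1\leq i<j<l\leq n} D^3_{y_iy_jy_l}G(U_\mu)$. Their sum is precisely $Q_n$, whence $\lambda^*_n(y_1,\dots,y_n,\mu)=e^{\nu Q_n}={\cal E}_\mu(y_1,\dots,y_n)$ a.s.

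The only point requiring care is the bookkeeping of the index sets in Theorem \ref{Th:papan}: one must confirm that the loosely written sums $\sum_{i_1,\dots,i_{m-1}}D^{m-1}_{\cdots}$ there range over distinct $j$-subsets, so that after truncation at $j=3$ they reproduce the index ranges $i<j$ and $i<j<l$ used in $Q_n$ without spurious multiplicities. Since each $D^j$ is symmetric in its arguments, each unordered $j$-subset contributes a single term and this identification is immediate. For $n=1,2,3$ the claim can additionally be checked against the explicit displays for $D_yG$, $D^2_{y_1y_2}G$, and $D^3_{y_1y_2y_3}G$ given just before the statement, which serves as a consistency check rather than a separate argument.
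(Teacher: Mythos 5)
Your proposal is correct and follows exactly the route the paper intends: the paper's (implicit) proof of this corollary is precisely "apply Theorem \ref{Th:papan} and use the fact, recorded just before the statement, that all differences of $G=(S,L,N)$ of order higher than three vanish, so that $Q_nG(U_\mu)$ collapses to $Q_n$." Your additional remark about the index bookkeeping in Theorem \ref{Th:papan} is a sound clarification of the paper's loose notation, not a deviation in method.
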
 
\begin{theorem}\label{theo5}Let $\mu $ be the process of circular plates on $Y$ (\ref{bcp}) with density (\ref{den}), $\nu_2\leq 0,\;\nu_3\leq 0.$
Then $${\mathbb E}S(U_\mu )=\int_Y{\mathbb E}[{\cal E}_\mu (y)]A(y)\lambda (dy),$$$${\mathbb E}L(U_\mu )=\frac{1}{2}\int_{Y^2}{\mathbb E}[{\cal E}_\mu (y_1,y_2)]l(y_1\cap y_2)\lambda (d(y_1,y_2)),$$$${\mathbb E}N(U_\mu )=\frac{1}{6}\int_{Y^3}{\mathbb E}[{\cal E}_\mu (y_1,y_2,y_3)]1_{[y_1\cap y_2\cap y_3\neq\emptyset ]}\lambda (d(y_1,y_2,y_3)).$$
$${\mathbb E}[S(U_\mu )^2]=\int_{Y^2}{\mathbb E}[{\cal E}_\mu (y_1,y_2)]A(y_1)A(y_2)\lambda (d(y_1,y_2))+$$$$+\int_Y{\mathbb E}[{\cal E}_\mu (y)]A(y)^2\lambda (dy),$$
${\mathbb E}[L(U_\mu )^2]=$$$\frac{1}{4}\int_{Y^4}{\mathbb E}[{\cal E}_\mu (y_1,y_2,y_3,y_4)]l(y_1\cap y_2)l(y_3\cap y_4)\lambda (d(y_1,\dots ,y_4))$$$$+\int_{Y^3}{\mathbb E}[{\cal E}_\mu (y_1,y_2,y_3)]l(y_1\cap y_2)l(y_3\cap y_1)\lambda (d(y_1,y_2,y_3))+$$$$+\frac{1}{2}\int_{Y^2}{\mathbb E}[{\cal E}_\mu (y_1,y_2)]l(y_1\cap y_2)^2\lambda (d(y_1,y_2)).$$
${\mathbb E}[N(U_\mu )^2]=$$$\frac{1}{36}\int_{Y^6}{\mathbb E}[{\cal E}_\mu (y_1,\dots ,y_6)]1_{[y_1\cap y_2\cap y_3\neq\emptyset ]}1_{[y_4\cap y_5\cap y_6\neq\emptyset ]}\lambda (d(y_1,\dots ,y_6))+$$$$+\frac{1}{4}\int_{Y^5}{\mathbb E}[{\cal E}_\mu (y_1,\dots ,y_5)]1_{[y_1\cap y_2\cap y_3\neq\emptyset ]}1_{[y_4\cap y_5\cap y_1\neq\emptyset ]}\lambda (d(y_1,\dots ,y_5))+$$$$+\frac{1}{2}\int_{Y^4}{\mathbb E}[{\cal E}_\mu (y_1,\dots ,y_4)]1_{[y_1\cap y_2\cap y_3\neq\emptyset ]}1_{[y_4\cap y_2\cap y_1\neq\emptyset ]}\lambda (d(y_1,\dots ,y_4))+$$$$+\frac{1}{6}\int_{Y^3}{\mathbb E}[{\cal E}_\mu (y_1,y_2,y_3)]1_{[y_1\cap y_2\cap y_3\neq\emptyset ]}\lambda (d(y_1,y_2,y_3)).$$\end{theorem}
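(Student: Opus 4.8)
The plan is to identify $S(U_\mu)$, $L(U_\mu)$ and $N(U_\mu)$ as $U$-statistics of orders $1$, $2$ and $3$ and then to feed them into Theorem \ref{theo1} for the first moments and into Theorem \ref{theo2} with $m=2$, $F_1=F_2$, for the second moments, using the Corollary to replace every conditional intensity $\lambda^*_{|\sigma|}$ by ${\cal E}_\mu$. From the definitions the driving functions are $f_S(y)=A(y)$, $f_L(y_1,y_2)=\frac{1}{2}l(y_1\cap y_2)$ and $f_N(y_1,y_2,y_3)=\frac{1}{6}1_{[y_1\cap y_2\cap y_3\neq\emptyset]}$; all three are nonnegative and symmetric, so the hypotheses imposed on the $f_i$ in Theorem \ref{theo2} are met.

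Before invoking these theorems I would check the integrability requirements $p\in L_2(P_\eta)$ and $S^2,L^2,N^2\in L_2(P_\eta)$, arguing as in Example \ref{ex2} by means of (\ref{odhad}). Since $\nu_2\leq 0$, $\nu_3\leq 0$ and $L,N\geq 0$, one has $\exp(\nu G(U_x))\leq\exp(\nu_1 S(U_x))$, and because the plate radii are bounded by $b$ the area is bounded, so $S(U_\eta)\leq c\,n(\eta)$ for a constant $c$; as the Poisson count $n(\eta)$ has all exponential moments, $\mathbb{E}[p(\eta)^2]<\infty$. Likewise $L(U_\eta)$ and $N(U_\eta)$ are dominated by fixed powers of $n(\eta)$, whence their fourth $P_\eta$-moments are finite and $S^2,L^2,N^2\in L_2(P_\eta)$. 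This legitimizes both theorems.

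The first moment formulas are then immediate: substituting $f_S,f_L,f_N$ into (\ref{effa}) and replacing $\lambda^*_1,\lambda^*_2,\lambda^*_3$ by ${\cal E}_\mu(y)$, ${\cal E}_\mu(y_1,y_2)$, ${\cal E}_\mu(y_1,y_2,y_3)$ reproduces the three displayed expressions. For the second moments I would apply (\ref{divi}) with $m=2$ and $F_1=F_2$, so that the sum ranges over $\sigma\in\Pi_{k,k}$ with $k=1,2,3$. The key is to organize this sum: an admissible partition is determined by a partial matching between the first $k$ and the last $k$ indices, so if $j$ denotes the number of matched pairs $(0\leq j\leq k)$ there are exactly $\binom{k}{j}^2 j!$ such partitions, each with $|\sigma|=2k-j$ blocks. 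Since ${\cal E}_\mu$ is symmetric in its point-arguments, all partitions with the same $j$ give the same integral, namely the one obtained by identifying the paired variables in $f\otimes f$; collecting the weight $\binom{k}{j}^2 j!$ against the prefactor coming from $f$ then yields the stated constants. For $k=1$ this produces the two terms of $\mathbb{E}[S(U_\mu)^2]$; for $k=2$ the values $j=0,1,2$ (with $1,4,2$ partitions and resulting coefficients $\frac{1}{4},1,\frac{1}{2}$) give the three terms of $\mathbb{E}[L(U_\mu)^2]$; for $k=3$ the values $j=0,1,2,3$ (with $1,9,18,6$ partitions and coefficients $\frac{1}{36},\frac{1}{4},\frac{1}{2},\frac{1}{6}$) give the four terms of $\mathbb{E}[N(U_\mu)^2]$.

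The step I expect to be most delicate is this partition bookkeeping for $\mathbb{E}[N(U_\mu)^2]$: one must enumerate the $34$ admissible partitions of $\Pi_{3,3}$, group them by the number $j$ of matched pairs, verify that partitions in one group yield identical integrals by the symmetry of ${\cal E}_\mu$ and relabeling of the integration variables, and in the fully matched case $j=3$ use $1_{[\cdot]}^2=1_{[\cdot]}$ to collapse the product of two indicators to a single one, which accounts for the integral over $Y^3$ in the last term. As a cross-check, the weights $\binom{k}{j}^2 j!$ are exactly the coefficients $i!\binom{k}{i}^2$ appearing in the Poisson variance formula (\ref{vari}), $i=j$.
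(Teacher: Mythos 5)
Your proposal is correct and takes essentially the same route as the paper: verify the integrability assumptions of Theorems \ref{theo1} and \ref{theo2} via (\ref{odhad}) using the bounds $S(U_x)\leq \pi b^2 n(x)$, $L(U_x)\leq 2b\binom{n(x)}{2}$, $N(U_x)\leq \binom{n(x)}{3}$ together with $\nu_2\leq 0,\ \nu_3\leq 0$, then apply Theorem \ref{theo1} for the first moments and Theorem \ref{theo2} (with the Corollary identifying $\lambda^*_n$ with ${\cal E}_\mu$) for the second moments. The only difference is one of detail, not of method: the paper compresses the combinatorics into ``from which the formulas follow,'' whereas you make explicit the enumeration of $\Pi_{k,k}$ by partial matchings with weights $\binom{k}{j}^2 j!$ (giving $1{+}1$, $1{+}4{+}2$, and $1{+}9{+}18{+}6=34$ partitions for $k=1,2,3$), and all of your counts and resulting coefficients agree with the stated formulas.
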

\noindent{\bf Proof:} We verify assumptions of Theorems \ref{theo1} and \ref{theo2} from which the formulas follow.
For $x\in\mathbf N$ with $n(x)=n$ we have estimates $$S(U_x)\leq \pi b^2n,\quad L(U_x)\leq 2b\binom{n}{2},\quad N(U_x)\leq \binom{n}{3}.$$ 
Since $\nu_2\leq 0,\;\nu_3\leq 0$ we have $$p^2(x)\leq const.\exp (2\nu_1\pi b^2n(x)),$$ and from (\ref{odhad})$$\sum_{n=0}^\infty\frac{\lambda (Y)^n}{n!}\exp (2\nu_1\pi b^2n)<+\infty .$$ Concerning the powers of $U$-statistics $S(U_x),$ $\: L(U_x),$ $\: N(U_x)$ an analogous estimate of (\ref{odhad}) is finite. \hfill $\Box $\\\\
From Theorem \ref{theo2} one can also obtain explicit formulas for mixed moments of $U$-statistics, e.g.
$${\mathbb E}[L(U_\mu )N(U_\mu )]=\frac{1}{2}\int_{Y^3}{\mathbb E}[{\cal E}_\mu (y_1,y_2,y_3)]l(y_1\cap y_2)1_{[y_1\cap y_2\cap y_3\neq\emptyset ]}\lambda (d(y_1,y_2,y_3))+$$$$+\frac{1}{2}\int_{Y^4}{\mathbb E}[{\cal E}_\mu (y_1,\dots ,y_4)]l(y_1\cap y_2)1_{[y_1\cap y_3\cap y_4\neq\emptyset ]}\lambda (d(y_1,\dots ,y_4))+$$$$+\frac{1}{12}\int_{Y^5}{\mathbb E}[{\cal E}_\mu (y_1,\dots ,y_5)]l(y_1\cap y_2)1_{[y_3\cap y_4\cap y_5\neq\emptyset ]}\lambda (d(y_1,\dots ,y_5)).$$ Higher-order moments can be briefly formulated by formula (\ref{divi}), e.g. $${\mathbb E}[S(U_\mu )L(U_\mu )N(U_\mu )]=\frac{1}{12}\sum_{\sigma\in\prod_{1,2,3}}\int_{Y^{|\sigma |}}(s(.)\otimes l(.\cap .)\otimes 1_{[.\cap .\cap .\neq\emptyset ]})_{|\sigma |}\times $$$$\times {\mathbb E}\lambda^*_{|\sigma |}(x_1,\dots ,x_{|\sigma |};\mu )\lambda^{|\sigma |}(d(x_1,\dots ,x_{|\sigma |})).$$ This expression has ten terms, the coefficients of which can be obtained from (\ref{acka}).

We obtain similar results for the segment process $\mu $ in ${\mathbb R}^2$ with $U$-statistics $G(U_x)$ in (\ref{gseg}). Here we have
for $y,y_i\in Y$ (\ref{bck}), $y,y_i\notin x,\; x\in {\mathbf N}$ $$D_yG(U_x)=\left(\begin{array}{c}l(y)\\\sum_{s\in x}1_{[s\cap y\neq\emptyset ]}\end{array}\right),\;D^2_{y_1y_2}G(U_x)=\left(\begin{array}{c}0\\1_{[y_1\cap y_2\neq\emptyset ]}\end{array}\right).$$ 
Define analogously ${\cal E}_\mu (y)=\exp (\nu D_yG(U_\mu )),\;
{\cal E}_\mu (y_1,\dots ,y_m)=\exp (\nu Q_m),\;m\in {\mathbb N}$
$$Q_m=\sum_{i=1}^m
D_{y_i}G(U_\mu )+\sum_{1\leq i<j\leq m}D^2_{y_iy_j}G(U_\mu ).$$ Observe as in Corollary 1
that a.s. $${\cal E}_\mu(y_1,\dots ,y_m)=\lambda^*_m(y_1,\dots ,y_m),\; m\in {\mathbb N}.$$
\begin{corollary}\label{crdva}Let $\mu $ be the segment process on $Y$ (\ref{bck}) with density (\ref{den}), $\nu_2\leq 0,$ then for $U$-statistics (\ref{lsg}) and (\ref{nsg}) we have $${\mathbb E}L(U_\mu )=\int_Y{\mathbb E}[{\cal E}_\mu (y)]l(y)\lambda (dy),$$$${\mathbb E}N(U_\mu )=\frac{1}{2}\int_{Y^2}{\mathbb E}[{\cal E}_\mu (y_1,y_2)]1_{[y_1\cap y_2\neq\emptyset ]}\lambda (d(y_1,y_2)),$$
$${\mathbb E}[L(U_\mu )^2]=\int_Y{\mathbb E}[{\cal E}_\mu (y)]l(y)^2\lambda (dy)+$$$$+\int_{Y^2}{\mathbb E}[{\cal E}_\mu (y_1,y_2)]l(y_1)l(y_2)\lambda (d(y_1,y_2)),$$\medskip
$${\mathbb E}[N(U_\mu )^2]=\frac{1}{2}\int_{Y^2}{\mathbb E}[{\cal E}_\mu (y_1,y_2)]1_{[y_1\cap y_2\neq\emptyset ]}\lambda (d(y_1,y_2))+$$$$+\int_{Y^3}{\mathbb E}[{\cal E}_\mu (y_1,y_2,y_3)]1_{[y_1\cap y_2\neq\emptyset ]}1_{[y_3\cap y_1\neq\emptyset ]}\lambda (d(y_1,y_2,y_3))+$$$$+\frac{1}{4}\int_{Y^4}{\mathbb E}[{\cal E}_\mu (y_1,y_2,y_3,y_4)]1_{[y_1\cap y_2\neq\emptyset ]}1_{[y_3\cap y_4\neq\emptyset]} \lambda (d(y_1,\dots ,y_4)).$$
\end{corollary}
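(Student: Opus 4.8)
The plan is to read off $L(U_\mu)$ and $N(U_\mu)$ from (\ref{lsg})--(\ref{nsg}) as $U$-statistics of orders $1$ and $2$ driven by the kernels $f_L(y)=l(y)$ and $f_N(y_1,y_2)=\tfrac12\,1_{[y_1\cap y_2\neq\emptyset]}$, and then to substitute these kernels, together with the identification ${\cal E}_\mu(y_1,\dots,y_m)=\lambda^*_m(y_1,\dots,y_m)$ recorded just above, into Theorems \ref{theo1} and \ref{theo2}. This mirrors the proof of Theorem \ref{theo5} for the plate process and reduces everything to a verification of hypotheses together with partition bookkeeping.

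First I would check the $L_2(P_\eta)$ assumptions needed in Theorems \ref{theo1} and \ref{theo2}. For a configuration $x$ with $n(x)=n$ points one has the deterministic bounds $L(U_x)\leq b\,n$ and $N(U_x)\leq\binom n2$; since $\nu_2\leq 0$ and $L,N\geq 0$ the density obeys $p^2(x)\leq\mathrm{const}\cdot\exp(2|\nu_1|b\,n(x))$. Inserting this into (\ref{odhad}) gives the convergent series $\sum_{n\geq 0}\tfrac{\lambda(Y)^n}{n!}\exp(2|\nu_1|b\,n)<\infty$, and the same estimate with the extra polynomial factors arising from $L^2$ and $N^2$ (and their squares) stays finite, placing all the relevant functionals and $p$ in $L_2(P_\eta)$.

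The two first-moment identities then follow from a single application of Theorem \ref{theo1}: order $1$ gives $\int_Y l(y)\,\mathbb E[\lambda^*_1(y,\mu)]\lambda(dy)$ and order $2$ gives $\tfrac12\int_{Y^2}1_{[y_1\cap y_2\neq\emptyset]}\,\mathbb E[\lambda^*_2(y_1,y_2,\mu)]\lambda(d(y_1,y_2))$, and replacing $\lambda^*_m$ by ${\cal E}_\mu$ yields the stated forms. For $\mathbb E[L(U_\mu)^2]$ I apply Theorem \ref{theo2} (equivalently (\ref{prodva})) with $m=2,\ k_1=k_2=1$: the admissible partitions in $\Pi_{1,1}$ are $\{\{1\},\{2\}\}$, giving the order-$2$ integral of $l(y_1)l(y_2)$, and $\{\{1,2\}\}$, giving the order-$1$ integral of $l(y)^2$, which is precisely the claimed two-term formula.

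The only step needing genuine care is $\mathbb E[N(U_\mu)^2]$, a product of two order-$2$ $U$-statistics, where I would enumerate $\Pi_{2,2}$ as in Remark 1. With blocks $\{1,2\}$ and $\{3,4\}$, admissibility lets each block of $\sigma$ meet each of these in at most one point, so $\sigma$ ranges over the partial matchings between $\{1,2\}$ and $\{3,4\}$: one no-merge partition ($|\sigma|=4$), four single-merge partitions ($|\sigma|=3$), and two full-matching partitions ($|\sigma|=2$), seven in all. Carrying the factor $\tfrac14=(\tfrac12)^2$ from the two kernels, the two full matchings coincide after relabelling and sum to coefficient $\tfrac12$ (order-$2$ integral of $1_{[y_1\cap y_2\neq\emptyset]}$), the four single merges coincide and sum to coefficient $1$ with one variable shared between the two indicators, namely $1_{[y_1\cap y_2\neq\emptyset]}1_{[y_3\cap y_1\neq\emptyset]}$ (order $3$), and the no-merge term retains coefficient $\tfrac14$ (order-$4$ integral of $1_{[y_1\cap y_2\neq\emptyset]}1_{[y_3\cap y_4\neq\emptyset]}$); this reproduces the three-term expression. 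I expect this coefficient collection, matching the multiplicities $2,4,1$ of the partition classes against the symmetrised kernels, to be the main though entirely mechanical obstacle, the remainder being a direct transcription of the plate-process argument.
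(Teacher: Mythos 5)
Your proposal is correct and follows essentially the same route as the paper: the paper proves this corollary by the argument of Theorem \ref{theo5}, i.e.\ verifying the $L_2(P_\eta)$ hypotheses of Theorems \ref{theo1} and \ref{theo2} via deterministic bounds on $L,N$ and the density together with (\ref{odhad}), then applying those theorems with the kernels $l(y)$ and $\tfrac12 1_{[y_1\cap y_2\neq\emptyset]}$ and the identification ${\cal E}_\mu=\lambda^*_m$. Your explicit enumeration of $\Pi_{1,1}$ and $\Pi_{2,2}$ (multiplicities $1,4,2$ with the factor $\tfrac14$, yielding coefficients $\tfrac14,1,\tfrac12$) is exactly the bookkeeping the paper leaves implicit in Theorem \ref{theo2} and formula (\ref{prodva}), and it reproduces the stated coefficients correctly.
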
 \noindent The proof is as in Theorem \ref{theo5}.
 
The assumptions on the parameter vector $\nu $ correspond to non-attractive interactions among objects (plates or segments). 
\subsection{Geometric functionals in logaritmic form}
Here we deal with $$H_m(\eta )=m\log F(\eta )+\log p(\eta ),\; m=1,2,\dots $$ in (\ref{gk}) having in mind that the process $\mu $ with density $p$ w.r.t. $\eta $ is related by means of $\log {\mathbb E}F^m(\mu )\geq {\mathbb E}H_m(\eta ).$ Now consider the density (\ref{den}) where $$\log p(x)=-\log c_\nu +\nu_1S(U_x)+\nu_2L(U_x)+\nu_3N(U_x)$$ which is a finite Gibbsian form, cf. (\ref{gid}) with $l=3$ non-constant terms. For $F(x)$ consider one of the three choices: $F(x)=e^{S(U_x)},\;e^{L(U_x)},\;e^{N(U_x)},$ accordingly we write $H_m^1, H_m^2, H_m^3,$ respectively:
 \begin{eqnarray*}H_m^1(\eta )&=&-\log c_\nu+(m+\nu_1)S(U_\eta )+\nu_2L(U_\eta )+\nu_3N(U_\eta )\\
H_m^2(\eta )&=&-\log c_\nu+\nu_1S(U_\eta )+(m+\nu_2)L(U_\eta )+\nu_3N(U_\eta )\\
H_m^3(\eta )&=&-\log c_\nu+\nu_1S(U_\eta )+\nu_2L(U_\eta )+(m+\nu_3)N(U_\eta )\end{eqnarray*}
In order to study the statistics $H_m^p$ we need to investigate multivariate behavior of a vector of $U$-statistics, e.g. for the process of plates in ${\mathbb R}^3$
 $$(S(U_\eta ),L(U_\eta ),N(U_\eta )).$$ Generally for $l\geq 1$ and $i=1,\dots ,l$ let $k_i\in {\mathbb N},$ $f^{(i)}\in L_1(\lambda^{k_i})$ be symmetric functions, $$F^{(i)}(\eta )=\sum_{(x_1,\dots ,x_{k_i})\in\eta^{k_i}_{\neq }} f^{(i)}(x_1,\dots ,x_{k_i}).$$ Consider Poisson processes $\eta_a$ with intensity measures $\lambda_a=a\lambda ,\;a>0.$ 
Following \cite{RefL} $U$-statistics $$F^{(i)}_a(\eta_a)=\sum_{(x_1,\dots ,x_{k_i})\in\eta^{k_i}_{a\neq }} f^{(i)}(x_1,\dots ,x_{k_i})$$ are transformed to \begin{equation}\label{tri}
\hat{F}_a^{(i)}=a^{-(k_i-\frac{1}{2})}(F_a^{(i)}-{\mathbb E}F_a^{(i)}).\end{equation}
The asymptotic covariances are
\begin{equation}\label{asco}C_{ij}=\lim_{a\rightarrow\infty }cov(\hat{F}_a^{(i)},\hat{F}_a^{(j)})=\int T_1F^{(i)}(x)T_1F^{(j)}(x) \lambda (dx),\;i,j\in\{1,\dots ,l\}.\end{equation} 
The convergence under the distance between 
 $l$-dimensional random vectors $X,Y$ 
$$d_3 (X,Y)=\sup_{g\in {\cal H}}|{\mathbb E}g(X)-{\mathbb E}g(Y)|,$$ where $\cal H$ is the system of functions $h\in C^3({\mathbb R}^l)$ with $$\max_{1\leq i_1\leq i_2\leq l}\sup_{x\in {\mathbb R}^l}\big|\frac{\partial^2h(x)}{\partial x_{i_1}\partial x_{i_2}}\big|\leq 1,\quad
\max_{1\leq i_1\leq i_2\leq i_3\leq l}\sup_{x\in {\mathbb R}^l}|\frac{\partial^3h(x)}{\partial x_{i_1}\partial x_{i_2}\partial x_{i_3}}|\leq 1$$ implies convergence in distribution. Based on the multi-dimensional Malliavin-Stein inequality derived in \cite{RefZ} for the distance $d_3$ of a random vector from a centered Gaussian random vector $X$ with covariance matrix $C=(C_{ij})_{i,j=1,\dots ,l},$ 
\cite{RefL} show that under the assumption \begin{equation}\label{assd}\int |T_1F^{(i)}|^3d\lambda <\infty,\;i=1,\dots ,l,\end{equation} there exists a constant $c$ such that \begin{equation}\label{mltv}d_3((\hat{F}_a^{(1)},\dots ,\hat{F}_a^{(l)}),X)\leq ca^{-\frac{1}{2}},\; a\geq 1.\end{equation}

\begin{example}Consider the Poisson segment process on $Y$ (\ref{bck}) with intensity measure $\lambda $ (\ref{inti}) and the $U$-statistics (\ref{lsg}) and (\ref{nsg}). In (\ref{asco})
 $$C_{11}=\int_Y l(s)^2\lambda (ds),\;\;   C_{22}=\int_Y\lambda (\{s:s\cap t\neq\emptyset \})^2\lambda (dt),$$$$C_{12}=2\int_Y l(y)\lambda (\{s:s\cap y\neq\emptyset\})\lambda (dy).$$ 
The assumption (\ref{assd}) transforms to conditions: $$\int_Y l(s)^3\lambda (ds)<\infty,\; \int_Y\lambda(\{s;s\cap y\neq\emptyset \})^3\lambda (dy)<\infty .$$ The finiteness of the intensity measure $\lambda $ in (\ref{inti}) and the boundedness of the segments guarantee that all integrals are finite. Thus for the random vector $(\hat{F}^{(1)}_a,\hat{F}^{(2)}_a)$ obtained by transform (\ref{tri}) of $$(L(U_{\eta_a}),N(U_{\eta_a}))$$ both the central limit theorem and the Berry-Esseen type inequality (\ref{mltv}) hold. 
\end{example}
\subsection*{Acknowledgement}
\small
This work was supported by the Czech Science Foundation, grant P201-10-0472 and it is dedicated to the memory of our teacher Professor Josef \v{S}t\v{e}p\'{a}n.

\end{document}